\documentclass{amsart}
\usepackage{pgf,tikz}
\usepackage{amssymb}
\usepackage{hyperref}
\usepackage{pst-node}
\usepackage{tikz-cd}
\usepackage[utf8]{inputenc}
\usepackage{pgf,tikz}
\usepackage{mathrsfs}
\usetikzlibrary{arrows}
\usepackage{pdfpages}
\usepackage{xcolor}

\usepackage{graphicx}

\bibliographystyle{amsplain}

\theoremstyle{plain}

\newtheorem{mainthm}{Theorem}

\newtheorem*{conj*}{Conjecture}
\newtheorem*{cor*}{Corollary}
\newtheorem*{def*}{Definition}

\newtheorem{theorem}{Theorem}[section]
\newtheorem{proposition}{Proposition}

\newtheorem{lemma}[theorem]{Lemma}

\theoremstyle{definition}

\newtheorem{example}{Example}

\theoremstyle{remark}
\newtheorem{remark}[theorem]{Remark}
\newtheorem{q}{Question}

\newcommand{\Z}{\mathbb{Z}}
\newcommand{\N}{\mathbb{N}}
\newcommand{\R}{\mathbb{R}}
\newcommand{\eps}{\varepsilon}


\newcommand{\diam}{\operatorname{diam}}

\title[Suspensions with two-sided limit shadowing]{Suspensions of homeomorphisms with the two-sided limit shadowing property}

\author[Jes\'us Aponte, Bernardo Carvalho and Welington Cordeiro]{Jes\'us Aponte, Bernardo Carvalho and Welington Cordeiro}
%

\thanks{2010 \emph{Mathematics Subject Classification}: Primary 37B99; Secondary 37D99.}

\begin{document}

\maketitle
\begin{abstract}{In this paper we discuss the two-sided limit shadowing property for continuous flows defined in compact metric spaces. We analyze some of the results known for the case of homeomorphisms in the case of continuous flows and observe that some differences appear in this scenario. We prove that the suspension flow of a homeomorphism satisfying the two-sided limit shadowing property also satisfies it. This gives a lot of examples of flows satisfying this property, however it enlighten an important difference between the case of flows and homeomorphisms: there are flows satisfying the two-sided limit shadowing property that are not topologically mixing, while homeomorphisms satifying the two-sided limit shadowing property satisfy even the specification property. There are no homeomorphisms on the circle satisfying the two-sided limit shadowing property but we exhibit examples of flows on the circle satisfying it. It can happen that a suspension flow has the two-sided limit shadowing property but the base homeomorphism does not, though it is proved that it must satisfy a strictly weaker property called two-sided limit shadowing with a gap (as in \cite{CK}). We define a similar notion of two-sided limit shadowing with a gap for flows and prove that these notions are actually equivalent in the case of flows. Finally, we prove that singular suspension flows (in the sense of Komuro \cite{K}) do not satisfy the two-sided limit shadowing property.}
\end{abstract}



\bigskip


\section{Introduction and Statement of Results}

In topological dynamics, an important place is given to the shadowing theory, where many variants of pseudo-orbit tracing properties are discussed, mainly considering different notions of pseudo-orbits and shadowing points. Among them there is the \emph{limit shadowing property} which has been given much attention recently (see \cite{Cthesis}, \cite{C}, \cite{C2}, \cite{CC}, \cite{CK}, \cite{ENP}, \cite{KKO}, \cite{KO}, \cite{P1}, \cite{P} and others). It deals with pseudo-orbits indexed by positive integers and with one-step errors converging to zero in the future, usually called \emph{limit pseudo-orbits}, and with orbits shadowing them in the limit (see Section 2 for precise definitions). A two-sided analogue of the limit shadowing property is also defined in the literature considering pseudo-orbits indexed by the integers and with the same limit conditions both in the future and in the past. It is called the \emph{two-sided limit shadowing property}. The dynamics of systems with such property has been studied (see  \cite{Cthesis}, \cite{C}, \cite{C2} and \cite{CK}) and the class of homeomorphisms satisfying it is growing (see \cite{ACCV}, \cite{CC} and \cite{CC2}). It is known that the two-sided limit shadowing property differs in several ways from the limit shadowing property. For example, systems with the two-sided limit shadowing property must have the shadowing property, the average shadowing property, the asymptotic average shadowing property, must be topologically mixing, admit the specification property and have positive topological entropy (see \cite{CK} for the proofs) making it one of the strongest known notions of pseudo-orbit tracing properties, while there are systems with the limit shadowing property but without any of these dynamical properties (see \cite{GOP} for example). Examples of homeomorphisms satisfying the two-sided limit shadowing property: topologically mixing Anosov diffeomorphisms (and more generally topologically hyperbolic homeomorphisms \cite{C}), pseudo-Anosov diffeomorphisms of the two-dimensional sphere \cite{ACCV} and some wild examples of infinite products of subshifts \cite{CK} without periodic points. 

In this note, we consider suspension flows of homeomorphisms satisfying the two-sided limit shadowing property, analyze some of the above mentioned results known for the case of homeomorphisms in this case of suspension flows and explain the differences that appear in this scenario. Before stating the results we define the main notion of this paper: the two-sided limit shadowing property for flows. Through the whole paper we let $(X,d)$ denote a metric space.  A {\em  flow} in $X$ is a continuous function $\phi\colon X\times\mathbb{R}\to X$ satisfying 
\begin{enumerate}
\item $\phi(x,0)=x$, $\forall\,x\in X$ and
\item $\phi(\phi(x,t),s)=\phi(x,s+t)$, $\forall\,x\in X$ and $\forall\,s,t\in\mathbb{R}$. 
\end{enumerate}
Given $t\in\mathbb{R}$, the {\em time $t$-map} $\phi_t\colon X\to X$, defined by $\phi_t(x)=\phi(x,t)$ is a homemorphism of $X$ which inverse is $\phi_{-t}$. The orbit of a point $x\in X$ with respect to $\phi$ is the set $\mathcal{O}(x):=\{\phi_t(x); \,\, t\in\mathbb{R}\}$.
A sequence of pairs of points and times $(x_i,t_i)_{i\in \mathbb{Z}}\subset X\times \mathbb{R}$ is called a {\em two-sided limit pseudo-orbit} of $\phi$ if 
$$t_i\geq 1 \,\,\,\,\,\, \forall\, i\in\Z \,\,\,\,\,\, \text{and} \,\,\,\,\,\, \lim_{|i|\to\infty}d(\phi_{t_i}(x_i),\,x_{i+1})=0.$$ 
We denote by $x_0\star t$ the point in $X$ that is $t$ {\em units of time from} $x_0$ on the pseudo-orbit $(x_i,t_i)_{i\in \mathbb{Z}}$. 
Precisely, consider $(s_i)_{i\in\mathbb{Z}}$ the {\em sequence of sums of $(t_i)_{i\in\mathbb{Z}}$} defined by
\[s_i =
\begin{cases}
\displaystyle{\sum_{j=0}^{i-1}} t_j & i > 0,\\
0				             & i = 0, \\
-\displaystyle{\sum_{j=i}^{-1}} t_j & i < 0,
\end{cases}
\]
so that we can write
 \[x_0\star t = \phi_{\,t\,-\,s_i}(x_i)\ \mbox{whenever } s_i\leq t < s_{i+1}.\]

Let Rep denotes the set of all increasing homeomorphisms $f\colon\mathbb{R}\to\mathbb{R}$ satisfying $f(0)=0$. Elements of Rep are called \emph{reparametrizations} . Now we can define the two-sided limit shadowing property.

\begin{def*}
	We say that $\phi$ has the {\em two-sided limit shadowing property } if for every two-sided limit pseudo-orbit $(x_i,\,t_i)_{i\in\mathbb{Z}}$ of $\phi$ there are $h\in\mathrm{Rep}$ and $y\in X$ satisfying
	\[
	\lim_{|t|\to\infty}d(x_0\star t,\, \phi_{h(t)}(y)) = 0.
	\]
	In this case, we say that $y$  {\em two-sided limit shadows} $(x_i,\,t_i)_{i\in\mathbb{Z}}$ and that $(x_i,\,t_i)_{i\in\mathbb{Z}}$ is {\em two-sided limit shadowed} by $y$. 
\end{def*}
This property, in the set of continuous flows, was first defined in \cite{Zhu} where the authors prove that it is invariant under topological equivalences. It is also proved in \cite{Zhu} that a homeomorphism $f\colon X\to X$ of a compact metric space $X$ has the limit shadowing property if, and only if, its suspension flow $\phi^{f,\tau}$ (see precise definitions in Section 2) has the limit shadowing property, but curiously nothing is said about a similar result for the two-sided limit shadowing property, even though they consider it extensively in their paper. In the first result of this paper, we prove a two-sided analogue of their result and analyze some interesting consequences that are not clear at a first glance.  
\begin{mainthm}\label{sus}
If a homeomorphism $f\colon X\to X$ of a compact metric space $X$ satisfies the two-sided limit shadowing property, then its suspension flow $\phi^{f,\tau}$ satisfies the two-sided limit shadowing property.
\end{mainthm}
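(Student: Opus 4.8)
The plan is to pass through the base homeomorphism by recording how a flow orbit visits a global cross-section, apply the hypothesis there, and then lift the resulting shadowing orbit back to the flow together with a reparametrization. Throughout I use the section $\Sigma=X\times\{0\}$ of the suspension, whose first-return map is $f$ and whose return time is the roof function $\tau$ (continuous, hence bounded away from $0$ and $\infty$ by compactness of $X$); recall that along the flow the base coordinate of a point stays constant while the point rises inside a fibre, and is advanced by $f$ exactly when the orbit meets $\Sigma$. I also use the standard estimates for the suspension metric recalled in Section~2: two points are close if and only if their base points are close and their heights agree up to the identification $(x,\tau(x))\sim(f(x),0)$; in particular a small jump either keeps the base point essentially fixed or replaces it by (approximately) its image under $f^{\pm1}$.

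First I would reduce a flow limit pseudo-orbit to a limit pseudo-orbit of $f$. Given a two-sided limit pseudo-orbit $(x_i,t_i)_{i\in\Z}$ of $\phi^{f,\tau}$, form the broken orbit $\gamma(t)=x_0\star t$, which is a genuine orbit segment on each interval $[s_i,s_{i+1})$ and jumps at the break time $s_{i+1}$ by $\delta_i:=d(\phi_{t_i}(x_i),x_{i+1})$, with $\delta_i\to0$ as $|i|\to\infty$. Since $t_i\ge1$ and $\tau$ is bounded, $\gamma$ meets $\Sigma$ at a bi-infinite increasing sequence of times $\rho_n$; let $b_n\in X$ be the corresponding base points. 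Within a single genuine segment consecutive crossings satisfy $b_{n+1}=f(b_n)$ exactly, since the flow leaves the base coordinate unchanged between crossings. The only errors occur at the break times, of which there are boundedly many between two consecutive crossings (each segment lasts at least time $1$, while consecutive crossings are at most $\tau_{\max}$ apart); each such break moves the base coordinate by at most a quantity controlled by $\delta_i$, and a break that happens to straddle $\Sigma$ is absorbed, via the metric estimate, into one extra application of $f$. Summing boundedly many vanishing contributions and using the uniform continuity of $f$ gives $\lim_{|n|\to\infty}d(f(b_n),b_{n+1})=0$, so $(b_n)_{n\in\Z}$ is a two-sided limit pseudo-orbit of $f$.

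Applying the hypothesis to $f$ yields $z\in X$ with $\lim_{|n|\to\infty}d(f^n(z),b_n)=0$. I would then lift $z$ by setting $y=(z,0)\in\Sigma$, whose orbit meets $\Sigma$ at the base points $f^n(z)$ at the Birkhoff-sum times $\sigma_n$. Because $d(f^n(z),b_n)\to0$ and $\tau$ is uniformly continuous, the consecutive gaps $\rho_{n+1}-\rho_n$ and $\sigma_{n+1}-\sigma_n$ are both close to $\tau(f^n(z))$ and are bounded away from $0$ and $\infty$. Hence I can choose $h\in\mathrm{Rep}$ that sends $\rho_n$ to $\sigma_n$ for all large $|n|$, is affine on each intervening interval, and is completed to an increasing homeomorphism of $\R$ with $h(0)=0$ on the bounded remaining part. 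For large $|t|$ the points $\gamma(t)$ and $\phi_{h(t)}(y)$ then have base coordinates $b_n$ and $f^n(z)$, which are close, and heights differing by a vanishing amount (a unit-speed rise against an almost-unit slope over almost-equal fibre heights); the metric estimate gives $\lim_{|t|\to\infty}d(x_0\star t,\phi_{h(t)}(y))=0$, as required.

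The main obstacle is the first reduction: the bookkeeping of crossings near the break times, where a small jump of $\gamma$ can straddle the section and masquerade as an extra return. Controlling this requires the precise form of the suspension metric near $\Sigma$ together with the fact that only boundedly many breaks occur between consecutive returns; once this is in hand, the future and past limit conditions are handled symmetrically and the lifting step is routine.
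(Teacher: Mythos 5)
Your proposal is correct and follows essentially the same route as the paper: reduce the flow pseudo-orbit to a two-sided limit pseudo-orbit of the base map, apply the hypothesis there, and lift the shadowing point back with a piecewise-affine reparametrization matching return times. Your section-crossing bookkeeping (including the jumps that straddle $\Sigma$ and cost one extra application of $f^{\pm1}$) is exactly what the paper handles via the integers $n_k$ and Lemmas~2.4 and~2.5 of Thomas \cite{Th}, so the ``main obstacle'' you flag is the same technical step the paper resolves by citing those lemmas.
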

First, this proves that topological mixing (and also specification, positive entropy, average shadowing and asymptotic average shadowing) is not a necessary assumption for a flow to admit the two-sided limit shadowing property, strongly contrasting with the case of homeomorphisms (see \cite{CK}). When the base homeomorphism is transitive, then its suspension flow is also transitive but it is clearly not topologically mixing (when the height function is constant, see \cite{Bo}). Second, it is easy to see that the identity map defined on a space consisting of a single point has the two-sided limit shadowing property, so its suspension, which is conjugate to the rotation flow $\phi_t(z) = e^{2\pi i t }z$ defined in the circle $S^1$, has the two-sided limit shadowing property. This is also interesting since there are no homeomorphisms in $S^1$ satisfying the two-sided limit shadowing property (see the discussion after Theorem B in \cite{CK}) but this is an example of a continuous flow in $S^1$ satisfying it. Third, the converse of this theorem is false as the next example shows. 
\begin{example}\label{discrete}
Let $X=\{a,\,b\}$ be a set with two distinct points endowed with the discrete metric and consider the homeomorphism $f\colon X\to X$ defined by $f(a) = b$ and $f(b) = a$. It is proved in \cite{CK} that this map does not have the two-sided limit shadowing property. The suspension space of $X$ is the space $$X^f = (\{a\}\times [0,\,1]\cup \{b\}\times [0,\,1])/\sim$$ with $(b,\,1)\sim (a,\,0)$ and $(a,\, 1)\sim (b,\,0)$. It can be easily seen that $X^f$ is homeomorphic to $S^1$ and that $\phi^f$ is the rotation flow with a velocity change of factor 2. So this flow is conjugate to $\psi_t(z) = e^{\pi i t}z$ and equivalent to $\phi_t(z) = e^{2\pi i t}z$ under the equivalence $(id_{S^1}, \sigma)$ where $\sigma(z,\,t) = 2t$ and therefore admits the two-sided limit shadowing property (as observed above).
\begin{center}
		\begin{tikzpicture}[line cap=round,line join=round,>=triangle 45,x=1.0cm,y=1.0cm]
	\clip(-3.98,0.23) rectangle (1.93,4.66);
	\draw (-2.66,4.04)-- (-2.66,0.6);
	\draw (-1.1,4.04)-- (-1.1,0.6);
	\draw (0.5,4.04)-- (0.5,0.6);
	\begin{scriptsize}
	\fill [color=black] (-2.66,4.04) circle (1.5pt);
	\draw[color=black] (-2.5,4.17) node {$1$};
	\fill [color=black] (-2.66,0.6) circle (1.5pt);
	\draw[color=black] (-2.5,0.6) node {$0$};
	\fill [color=black] (-1.1,4.04) circle (1.5pt);
	\draw[color=black] (-0.95,4.17) node {$b$};
	\fill [color=black] (-1.12,0.6) circle (1.5pt);
	\draw[color=black] (-0.95,0.6) node {$a$};
	\fill [color=black] (0.5,4.04) circle (1.5pt);
	\draw[color=black] (0.57,4.17) node {$a$};
	\fill [color=black] (0.5,0.6) circle (1.5pt);
	\draw[color=black] (0.65,0.6) node {$b$};
	\fill [color=black,shift={(-1.1,2.36)}] (0,0) ++(0 pt,2.25pt) -- ++(1.95pt,-3.375pt)--++(-3.9pt,0 pt) -- ++(1.95pt,3.375pt);
	\fill [color=black,shift={(0.5,2.36)}] (0,0) ++(0 pt,2.25pt) -- ++(1.95pt,-3.375pt)--++(-3.9pt,0 pt) -- ++(1.95pt,3.375pt);
	\end{scriptsize}
	\end{tikzpicture}
\end{center}
\end{example}


With this example in mind, we obtain a characterization for the suspension flow to satisfy the two-sided limit shadowing property. 

\begin{mainthm}\label{gap}
The suspension flow $\phi^{f,\tau}$ satisfies the two-sided limit shadowing property if, and only if, its base homeomorphism $f$ satisfies the two-sided limit shadowing property with a gap.
\end{mainthm}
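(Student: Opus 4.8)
The plan is to set up a dictionary between two-sided limit pseudo-orbits of the flow and of the base, using the first-return section $\Sigma=\{[x,0]:x\in X\}$, and then to transport the shadowing data across it while keeping careful track of how the section crossings of the tracking orbit are indexed. Throughout I write points of the suspension as $[x,s]$ with $0\le s<\tau(x)$, use that $\phi_{\tau(x)}([x,0])=[f(x),0]$, and exploit that on the compact base $x\mapsto[x,0]$ is a homeomorphism onto $\Sigma$ with uniform continuity in both directions, so that being close in the suspension to a point of $\Sigma$ forces the fibre coordinate near $0$ (mod $\tau$) and the base coordinates close. I also recall from \cite{CK} the property to be matched: $f$ has the \emph{two-sided limit shadowing property with a gap} if for every sequence $(x_i)_{i\in\Z}$ with $\lim_{|i|\to\infty}d(f(x_i),x_{i+1})=0$ there are $y\in X$ and $N\in\Z$ with
\[\lim_{i\to+\infty}d(f^i(y),x_i)=0\qquad\text{and}\qquad\lim_{i\to-\infty}d(f^{i+N}(y),x_i)=0.\]
The integer $N$ is the gap, and its role is precisely to be absorbed by a reparametrization once one passes to the flow.

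For the direction ($\Leftarrow$) I would start from a two-sided limit pseudo-orbit $(p_i,t_i)_{i\in\Z}$ of $\phi^{f,\tau}$ and read off the successive crossings of the trajectory $t\mapsto p_0\star t$ with $\Sigma$, recording a bi-infinite sequence $(\xi_k)_{k\in\Z}$ of base points. Between consecutive junctions $s_i$ this trajectory is a genuine orbit segment, so its crossings follow $f$ exactly; the only errors occur at crossings adjacent to a junction, where they are controlled by $d(\phi_{t_i}(p_i),p_{i+1})$. Since these tend to $0$ as $|i|\to\infty$, the sequence $(\xi_k)$ is a two-sided limit pseudo-orbit of $f$. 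Applying the hypothesis gives $y$ and a gap $N$; I lift $y$ to $q=[y,0]\in\Sigma$, whose forward crossings $f^k(y)$ track $\xi_k$ and whose backward crossings track $\xi_{k+N}$. The reparametrization $h$ is then built so that, far out in each tail, it sends the pseudo-time of the $k$-th crossing to the flow-time of the $k$-th (resp. $(k-N)$-th) crossing of $q$, interpolated monotonically on a finite middle window. The one delicate point is that when $N<0$ a naive matching would force $h$ to decrease at the transition; this is circumvented by enlarging the (still finite) transition window until it is longer than $|N|$, so that a monotone interpolation exists, and since the tracking estimate is only asymptotic, discarding this finite window costs nothing. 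Closeness between crossings then follows from uniform continuity of $\phi$ together with convergence of the matched roof times.

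For the direction ($\Rightarrow$) I would reverse this: given a two-sided limit pseudo-orbit $(x_i)_{i\in\Z}$ of $f$, lift it to the flow pseudo-orbit $p_i=[x_i,0]$, $t_i=\tau(x_i)$ (after normalizing $\tau\ge1$, or grouping consecutive steps so that each $t_i\ge1$), whose crossings with $\Sigma$ are the $x_i$ at pseudo-times $s_i$. Two-sided limit shadowing of the flow produces $q$ and $h\in\mathrm{Rep}$; writing $f^n(w)$ for the base points of the crossings of $q$, the tracking estimate forces $\phi_{h(s_i)}(q)$ to lie near $\Sigma$ with base point close to $x_i$, hence near some crossing $\theta_{n(i)}$ with $f^{n(i)}(w)\approx x_i$. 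The heart of the matter is to show that $n(i)-i$ stabilizes to a constant $c_+$ as $i\to+\infty$ and to a possibly different constant $c_-$ as $i\to-\infty$: between consecutive pseudo-crossings the tracking orbit crosses $\Sigma$ exactly once in the limit, so the integer differences $n(i+1)-n(i)$ converge to $1$ and are eventually equal to $1$ in each tail. Setting $y=f^{c_+}(w)$ then gives $d(f^i(y),x_i)\to0$ as $i\to+\infty$ and $d(f^{i+(c_--c_+)}(y),x_i)\to0$ as $i\to-\infty$, which is exactly two-sided limit shadowing with gap $N=c_--c_+$.

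I expect the main obstacle in both directions to be this crossing bookkeeping: establishing, from a uniform local-section structure near $\Sigma$ and adequate control of the reparametrization, that proximity in the suspension forces the tracking orbit to cross $\Sigma$ in step with the pseudo-trajectory up to a constant offset in each tail. This is what makes the net forward-minus-backward offset a well-defined integer, namely the gap, and shows that this integer is the sole obstruction to genuine two-sided limit shadowing, since, as seen in ($\Leftarrow$), any such finite offset can be absorbed by enlarging the transition window of the reparametrization.
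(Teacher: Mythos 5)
Your proposal is correct and follows essentially the same route as the paper: your forward direction is the paper's Theorem~\ref{gapbase} (lift the base pseudo-orbit to the section, apply flow shadowing, and show the crossing indices of the tracking orbit eventually increment in lockstep with the pseudo-orbit in each tail, the two tail offsets differing by the gap), and your reverse direction combines the suspension argument of Theorem~\ref{sus} with the gap-removal idea of Theorem~\ref{equivalent}. The only organizational difference is that you absorb the integer gap directly into the interpolated reparametrization at the transition window, whereas the paper isolates this step as Lemma~\ref{conserta} and Theorem~\ref{equivalent} (for flows, two-sided limit shadowing with a gap is equivalent to two-sided limit shadowing); the underlying device --- stretching a monotone reparametrization over a finite window long enough to absorb a possibly negative offset --- is the same.
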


The base homeomorphism $f$ satisfies a weaker notion (introduced in \cite{CK}) called \emph{two-sided limit shadowing with a gap}. In this property, gaps between the shadowing orbit and the pseudo orbit are allowed to exist. The base homeomorphism in Example \ref{discrete} has the two-sided limit shadowing property with gap 1. Information about this property can be found in \cite{CK}. Now we define an analogous property in the case of continuous flows.

\begin{def*}
	We say that $\phi$ has the {\em two-sided limit shadowing property with gap $N\in\N$} if for every two-sided limit pseudo-orbit $(x_i,\,t_i)_{i\in\mathbb{Z}}$ of $\phi$, there are $h\in\mathrm{Rep}$, $y\in X$ and $K\in\R$ satisfying $|K|\leq N$,
$$d(x_0\star t, \phi_{h(t)}(z))\to0, \,\,\,\,\,\, t\to-\infty \,\,\,\,\,\, \text{and}$$
$$d(x_0\star t, \phi_{h(t)+K}(z))\to0, \,\,\,\,\,\, t\to+\infty.$$	
	In this case, we say that $y$  {\em two-sided limit shadows} $(x_i,\,t_i)_{i\in\mathbb{Z}}$ with gap $K$ and that $(x_i,\,t_i)_{i\in\mathbb{Z}}$ is {\em two-sided limit shadowed} by $y$ with gap $K$. 
\end{def*}
We prove that in the case of flows, there is no difference between this and the two-sided limit shadowing property, strongly contrasting with the case of homeomorphisms. The idea is that when a shadowing orbit has a gap in its shadowing relation we can reparametrize the flow to remove this gap.

\begin{mainthm}\label{equivalent}
A continuous flow satisfies the two-sided limit shadowing property with a gap if, and only if, it satisfies the two-sided limit shadowing property.
\end{mainthm}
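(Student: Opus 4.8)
The plan is to prove the two implications separately, one of which is immediate. If $\phi$ satisfies the two-sided limit shadowing property, then given any two-sided limit pseudo-orbit $(x_i,t_i)_{i\in\Z}$ one obtains $h\in\mathrm{Rep}$ and $y\in X$ with $d(x_0\star t,\phi_{h(t)}(y))\to 0$ as $|t|\to\infty$; taking $K=0$ (so that $|K|\le N$ for every $N\in\N$) exhibits $\phi$ as having the two-sided limit shadowing property with gap $N$, hence with a gap. So the content is entirely in the converse.

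For the converse, suppose $\phi$ has the two-sided limit shadowing property with gap $N$ and fix a two-sided limit pseudo-orbit $(x_i,t_i)_{i\in\Z}$. Applying the gap property yields $h\in\mathrm{Rep}$, $y\in X$ and $K\in\R$ with $|K|\le N$ such that $d(x_0\star t,\phi_{h(t)}(y))\to 0$ as $t\to-\infty$ and $d(x_0\star t,\phi_{h(t)+K}(y))\to 0$ as $t\to+\infty$. The heart of the argument, as anticipated in the introduction, is to absorb the constant gap $K$ into a single reparametrization that governs both ends simultaneously. I would choose reals $0<b_1<b_2$ and define $\tilde h$ by setting $\tilde h(t)=h(t)$ for $t\le b_1$, setting $\tilde h(t)=h(t)+K$ for $t\ge b_2$, and interpolating by any strictly increasing continuous function on $[b_1,b_2]$ joining $h(b_1)$ to $h(b_2)+K$. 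Because the interpolation takes place entirely in $t>0$, we retain $\tilde h(0)=h(0)=0$; and since $\tilde h$ coincides with $h$ near $-\infty$ and with $h+K$ near $+\infty$, it is a continuous strictly increasing surjection of $\R$, hence a genuine element of $\mathrm{Rep}$.

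With this choice, for $t\le b_1$ (in particular as $t\to-\infty$) one has $\phi_{\tilde h(t)}(y)=\phi_{h(t)}(y)$, while for $t\ge b_2$ (in particular as $t\to+\infty$) one has $\phi_{\tilde h(t)}(y)=\phi_{h(t)+K}(y)$. The two tail conditions supplied by the gap property therefore combine into $d(x_0\star t,\phi_{\tilde h(t)}(y))\to 0$ as $|t|\to\infty$, which is precisely the two-sided limit shadowing of $(x_i,t_i)_{i\in\Z}$ witnessed by the single pair $(\tilde h,y)$. As the pseudo-orbit was arbitrary, $\phi$ satisfies the two-sided limit shadowing property.

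The only point demanding care is the existence of the strictly increasing interpolation on $[b_1,b_2]$, which requires $h(b_2)+K>h(b_1)$. When $K\ge 0$ this is automatic from the monotonicity of $h$; when $K<0$ one invokes $|K|\le N$ together with the fact that $h(b_2)-h(b_1)\to\infty$ as $b_2\to\infty$ (since $h$ is a homeomorphism of $\R$), so choosing $b_2$ large enough that $h(b_2)-h(b_1)>N\ge -K$ suffices. I expect this one-dimensional monotonicity bookkeeping, rather than any dynamical difficulty, to be the main obstacle: all the dynamical force is carried by the gap hypothesis, and the reparametrization is a purely auxiliary adjustment confined to a compact time interval that never disturbs the two tails where shadowing actually occurs.
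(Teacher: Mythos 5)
Your proposal is correct and follows essentially the same route as the paper: keep the reparametrization equal to $h$ on the negative axis and modify it on the positive axis so as to absorb the gap $K$. The paper isolates this as a separate lemma and splits into the cases $K>0$ (where the gap is only absorbed asymptotically, via $h(t)+Ke^{-1/t^2}$, so that continuity of the flow is needed to conclude) and $K<0$ (exact absorption after a finite time $t_0$), whereas your single interpolation on $[b_1,b_2]$ achieves exact agreement with $h(t)+K$ beyond $b_2$ for either sign of $K$, which is a slightly cleaner implementation of the same idea.
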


Even though topological mixing is not necessary for a continuous flow to admit the two-sided limit shadowing property, transitivity is, and also is the finite shadowing property as the next theorem shows. We note that, in general, the shadowing property is not equivalent to the finite shadowing property (see \cite{K}) but this is true for flows without fixed points.



\begin{mainthm}\label{transha}
If a continuous flow has the two-sided limit shadowing property, then it is transitive and satisfies the finite shadowing property. If it does not have fixed points, then it satisfies the shadowing property.
\end{mainthm}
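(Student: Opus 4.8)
The plan is to derive the three conclusions in the order \emph{chain transitivity}, then \emph{finite shadowing}, then \emph{transitivity}, and finally \emph{shadowing} under the fixed-point-free hypothesis. The backbone is a single gluing construction: given $a,b\in X$, sample the backward orbit of $a$ at unit times for the indices $i<0$ and the forward orbit of $b$ at unit times for $i\ge 0$, joined by the one jump $a\mapsto b$ at index $0$ (all $t_i=1$). Since only one jump is nonzero, the limit condition holds automatically, so this is a two-sided limit pseudo-orbit, and the two-sided limit shadowing property yields $y\in X$ and $h\in\mathrm{Rep}$ with $d(\phi_t(a),\phi_{h(t)}(y))\to0$ as $t\to-\infty$ and $d(\phi_t(b),\phi_{h(t)}(y))\to0$ as $t\to+\infty$. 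Because $h$ is an increasing homeomorphism of $\R$ fixing $0$, we have $h(t)\to\pm\infty$ with $t$, and the orbit of $y$ therefore satisfies $\alpha(y)=\alpha(a)$ and $\omega(y)=\omega(b)$; that is, $y$ connects the $\alpha$-limit of $a$ to the $\omega$-limit of $b$. If a proper attractor $A$ existed with dual repeller $R$, I would take $a\in A$ and $b\in R$: then $\alpha(y)\subseteq A$ forces the whole orbit of $y$ into $A$ (maximal invariance inside an isolating neighborhood of the attractor), so $\omega(y)\subseteq A$, contradicting $\omega(y)=\omega(b)\subseteq R$. Hence $\phi$ has no proper attractor and, by Conley theory, is chain transitive.

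Next I would prove finite shadowing by contradiction. Assuming it fails, there are $\varepsilon_0>0$ and, for each $n$, a finite $\tfrac1n$-pseudo-orbit $\xi_n$ that no orbit $\varepsilon_0$-shadows. Using chain transitivity I string these together into one bi-infinite sequence $Q$: fill the far past with a genuine backward orbit, place $\xi_1,\xi_2,\dots$ marching off to $+\infty$, and insert between the exit point of $\xi_n$ and the entrance point of $\xi_{n+1}$ a connecting $\tfrac1n$-chain provided by chain transitivity. Every jump in the $n$-th block is then $<1/n$ while the past jumps vanish, so $Q$ is a genuine two-sided limit pseudo-orbit. Applying the two-sided limit shadowing property gives $y,h$ limit-shadowing $Q$; thus for all sufficiently large $t$ the error $d(x_0\star t,\phi_{h(t)}(y))$ drops below $\varepsilon_0$, and since the blocks escape to $+\infty$ some $\xi_n$ lies entirely in that good range. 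Restricting $\phi_{h(\cdot)}(y)$ to that block (after an affine adjustment of $h$) then produces an orbit $\varepsilon_0$-shadowing $\xi_n$, contradicting its choice.

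With both facts in hand, transitivity follows in the standard way: for nonempty open $U,V$ pick $a\in U$, $b\in V$, take a sufficiently fine connecting chain from $a$ to $b$ (chain transitivity), and $\varepsilon$-shadow it by finite shadowing with $\varepsilon$ small enough that the shadowing orbit starts in $U$ and ends in $V$, giving $\phi_T(U)\cap V\neq\emptyset$. Finally, when $\phi$ has no fixed points I would upgrade finite shadowing to the full shadowing property by invoking the known equivalence of these two properties for fixed-point-free continuous flows on compact metric spaces; the reparametrization pathologies that separate finite shadowing from shadowing (as in \cite{K}) are concentrated at fixed points and are absent here.

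I expect the finite-shadowing step to be the main obstacle. The delicate points are assembling the adversarial finite pseudo-orbits into a single two-sided limit pseudo-orbit whose jumps genuinely tend to zero — which is precisely where chain transitivity is indispensable for the connectors — and then carrying out the reparametrization bookkeeping needed to extract an honest $\varepsilon_0$-shadowing of one block from the mere limit-shadowing of $Q$. The two external inputs I rely on are the Conley-theoretic equivalence ``no proper attractor $\Leftrightarrow$ chain transitive'' and the fixed-point-free equivalence of finite shadowing and shadowing.
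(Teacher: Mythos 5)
Your proof is correct, and its central step coincides with the paper's: the finite shadowing argument is essentially Lemma \ref{fshadowing} verbatim --- the same adversarial blocks, the same chain-transitive connectors gluing them into one limit pseudo-orbit, and the same translated reparametrization $g(t)=h(t+s_N)-h(s_N)$ used to extract an honest $\varepsilon_0$-shadowing of a single block. The two ends of your argument, however, take genuinely different routes. For chain transitivity the paper (Lemma \ref{chain}) uses the same glued pseudo-orbit --- backward orbit of a point of $\omega(x)$ followed by the forward orbit of a point of $\alpha(y)$ --- but then reads off a finite $\varepsilon$-chain from $x$ to $y$ directly from the shadowing orbit; you instead note that the shadowing point $y$ satisfies $\alpha(y)=\alpha(a)$ and $\omega(y)=\omega(b)$ and use this to exclude proper attractor--repeller pairs, invoking Conley's characterization of chain transitivity. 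Your version is slicker but imports Conley theory as a black box, while the paper's is elementary and self-contained; both are sound (your trapping-region argument that $\alpha(y)\subseteq A$ forces $y\in A$ is fine). For transitivity the paper does not use finite shadowing at all: Lemma \ref{chain2} builds one one-sided limit pseudo-orbit threading through $1/2^k$-dense nets for every $k$ and lets limit shadowing produce a single point $y$ with $\omega(y)=X$ outright. Your route (chain transitivity plus finite shadowing gives $\phi_T(U)\cap V\neq\emptyset$ for all nonempty open $U,V$) is also standard, but since the paper's definition of transitivity is the existence of a point with $\omega(x)=X$, you still owe a Baire-category (Birkhoff) step to pass from the open-set statement to a point-transitive one; this is classical on compact metric spaces, so it is not a gap, but it is precisely what the paper's explicit construction avoids. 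The final step is identical in both arguments: Komuro's Theorem 4 in \cite{K} upgrades finite shadowing to shadowing for flows without fixed points.
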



The following problem seems reasonable to be considered.

\begin{q}
Does there exists a continuous flow with fixed points and satisfying the two-sided limit shadowing property?
\end{q}

A singularity can be introduced in a suspension flow as Komuro did in \cite{K} defining the \emph{singular suspension flow} (see Section \ref{singu} for precise definitions). We prove this fixed point cannot coexists with the two-sided limit shadowing property.

\begin{mainthm}\label{singularsus}
The singular suspension of a homeomorphism of a compact metric space does not satisfy the two-sided limit shadowing property.
\end{mainthm}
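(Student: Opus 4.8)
The plan is to argue by contradiction. Suppose the singular suspension flow $\phi$ of $f\colon X\to X$ does satisfy the two-sided limit shadowing property. By Theorem~\ref{transha} it must then be transitive and satisfy the finite shadowing property. If the singular suspension is not transitive we are already done, so we may assume it is transitive; in particular its singularity $\sigma$ is non-isolated and is accumulated by regular orbits. I would record at the outset the two structural features of Komuro's construction that drive the argument: (i) $\sigma$ is a fixed point, so the only orbit meeting $\sigma$ is $\{\sigma\}$ and \emph{no} genuine orbit can travel from one side of $\sigma$ to the other in finite time — any orbit that comes within $\delta$ of $\sigma$ and later leaves a fixed neighbourhood $U$ has a transit time that blows up as $\delta\to 0$; and (ii) the local stable set $W^s_{loc}(\sigma)$ separates $U$, so that orbits entering $U$ very close to $\sigma$ are funnelled out along $W^u(\sigma)$, the exit branch being determined by the side of $W^s_{loc}(\sigma)$ on which the orbit lies.

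The heart of the proof is to manufacture a two-sided limit pseudo-orbit $(x_i,t_i)$ that no reparametrised orbit can track. The naive idea of letting the pseudo-orbit jump across $\sigma$ once fails: a single jump has bounded size, occurs at a finite index hence is unconstrained by the limit condition, and by transitivity can be re-routed through the regular part by a genuine orbit; moreover a pseudo-orbit converging to $\sigma$ is trivially shadowed by $\sigma$ itself. I would therefore build a pseudo-orbit making infinitely many ever-deeper approaches to $\sigma$ as $t\to+\infty$, whose closest-approach points tend to $\sigma$ but which between consecutive approaches performs an excursion of size bounded away from $0$, out along a branch of $W^u(\sigma)$ and back toward $\sigma$. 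Each time the pseudo-orbit is deep inside $U$ I insert a jump carrying it across $W^s_{loc}(\sigma)$; such a jump has size $\to 0$ (legitimate in the tail of a limit pseudo-orbit precisely because distances near $\sigma$ shrink to $0$) yet makes the pseudo-orbit exit along the branch of $W^u(\sigma)$ \emph{opposite} to the one dictated by its entry side. The regular connecting segments and returns are closed up with errors tending to $0$ using transitivity together with the finite shadowing property from Theorem~\ref{transha}, while in the past one simply follows a genuine orbit. Thus $(x_i,t_i)$ is a bona fide two-sided limit pseudo-orbit whose excursions keep it a definite distance from $\sigma$.

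Finally I would derive the contradiction. Suppose $y$ and $h\in\mathrm{Rep}$ two-sided limit shadow $(x_i,t_i)$. Since the closest-approach points tend to $\sigma$, the orbit $\phi_{h(t)}(y)$ must come arbitrarily close to $\sigma$ at the corresponding times; by feature (i) it is not the constant orbit $\sigma$, for $\sigma$ cannot follow the bounded excursions, so it is a genuine orbit repeatedly entering $U$ near $\sigma$. During each deep approach the limit condition forces $\phi_{h(t)}(y)$ onto the same side of $W^s_{loc}(\sigma)$ as the pseudo-orbit before its jump; being genuine, it cannot itself jump across $W^s_{loc}(\sigma)$, so by feature (ii) it is funnelled out along that entry-side branch, whereas the pseudo-orbit exits along the opposite branch. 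As the two branches lead to regions a definite distance apart, the shadowing distance stays bounded below at the exit times, which tend to $+\infty$, contradicting $d(x_0\star t,\phi_{h(t)}(y))\to 0$. The main obstacle is exactly making this last step rigorous: one must engineer the approaches, jumps and returns so that $(x_i,t_i)$ is genuinely a two-sided limit pseudo-orbit and so that any shadower is provably pinned to a definite side during each approach and then forced to diverge. This is where the impossibility of crossing $\sigma$ in finite time is used in tandem with the separating role of $W^s_{loc}(\sigma)$; controlling the reparametrisation is by contrast harmless, since $h$ can absorb the unbounded transit times near $\sigma$ but can never re-route the trace of a genuine orbit through the forbidden point $\sigma$.
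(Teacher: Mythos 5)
Your proposal rests on a picture of the singularity that does not match Komuro's construction, and this is a genuine gap rather than a repairable detail. In the singular suspension the vector field is $\dot x_1=0$, $\dot x_2=c(x)$: orbits are vertical segments $\{x\}\times[0,1]$ traversed upward, merely slowed down inside $U$. Consequently $W^u(e)\setminus\{e\}$ is a \emph{single} orbit (the arc $\{a\}\times(\tfrac12,1]$ continued through the identification) and $W^s(e)\setminus\{e\}$ is a single orbit; there are no two ``branches'' of the unstable set, and the local stable set plays no separating role for the dynamics. Since the horizontal coordinate is constant along orbits, two points on ``opposite sides'' of $a$ that both enter $U$ very close to $e$ exit $U$ at points that are still very close to each other (both near $(a,\tfrac34)$): there is no expansion transverse to the flow, so the small jump you insert across $W^s_{loc}(e)$ produces no macroscopic divergence at exit time. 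Your final step --- ``the two branches lead to regions a definite distance apart, so the shadowing distance stays bounded below'' --- therefore has nothing to bite on, and the only other mechanism you invoke (unbounded transit times near the singularity) is, as you yourself note, absorbed by the reparametrization $h$. The Lorenz-type argument you are emulating needs a genuine saddle; Komuro's singularity is not one.

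The paper's obstruction is global and structural, not local. Assuming the two-sided limit shadowing property, Theorem \ref{transha} gives transitivity and finite shadowing; Theorem \ref{lkj} gives the base map $f$ the two-sided limit shadowing property with a gap; and Proposition \ref{fixedstable}, exploiting precisely the fact that $W^u(e)$ and $W^s(e)$ each consist of the fixed point plus \emph{one} orbit, shows that every point of $X$ off the orbit of $a$ is transitive for $f$ and for $f^{-1}$. A shadowing argument rules out $a$ being periodic, so $f$ is minimal, hence conjugate to an adding machine, hence equicontinuous with trivial stable and unstable sets; combined with two-sided limit shadowing with a gap this forces $X$ to be a single finite orbit, contradicting the fact that $a$ is not isolated. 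If you want to salvage a constructive pseudo-orbit argument, it would have to encode this global tension (e.g.\ a pseudo-orbit sitting at $e$ in the past and following an arbitrary forward orbit, as in the proof of Proposition \ref{fixedstable}), not a local branch-switching near the singularity.
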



\section{Suspensions}

We begin this section with the definition of the suspension flow and after that we prove Theorem \ref{sus}. Let $f\colon X\to X$ be a homeomorphism defined in a compact metric space and $\tau\colon X\to\R^+$ be a continuous map. Consider the suspension space
\[X^{\tau,f}:=\{(x,t):\ 0\leq t\leq \tau(x),\  x\in X\}/\sim,\]
where $(x,\tau(x))\sim(f(x),0)$ for all $x\in X$ and endow it with the usual quotient topology. For each $x\in X$, let $(s_n(x))_{n\in\Z}$ denote the sequence of sums of the sequence $(\tau(f^n(x)))_{n\in\mathbb Z}$. The {\em suspension flow of $f$ with height function $\tau$} is the flow on $X^{\tau,f}$ defined by
$$\phi^{\tau,f}_t(x,s):=(f^n(x),s+t - s_n(x))$$
whenever $s+t \in [s_n(x),\,s_{n+1}(x))$, for some $n\in\mathbb{Z}$. 
Note that when $\tau\equiv 1$, $s_n(x)=\lfloor t+s\rfloor$, the integer part of $t+s$. Every suspension of $f$ is conjugate to the suspension of $f$ under the constant function $\tau\equiv1$. A homeomorphism from $X^{1,f}$ to $X^{\tau,f}$ that conjugates the flows is given by the map $(x,t)\mapsto (x,t\tau(x))$. Since the two-sided limit shadowing property is invariant under a conjugacy, we will concentrate only in suspensions flows with height function 1. In this case, we denote the suspension space $X^{1,f}$ by $X^f$ and the suspension flow $\phi^{1,\,f}$ by $\phi^f$. We can assume that $\mathrm{diam}(X)\leq 1$ and introduce a metric in $X^f$, known as the {\em Bowen-Walters metric}, as follows. Consider the subset $X\times\{t\}$ and give it the metric $d_t$ defined by 
$$d_t((x,t),(y,t)) = (1-t)d(x,y)+td(f(x),f(y))\,\,\,\,\,\, \text{for every} \,\,\,\,\,\, x,y\in X.$$ 
For each $(x,t)$, $(y,s)\in X^f$ we consider all finite sequences $(z_i,t_i)_{i=1}^n$ of elements of $X^f$ such that $(z_1,t_1) = (x, t)$ , $(z_n,t_n)=(y,s)$ and for each $1\leq i\leq n-1$ either $(x_i,t_i)$ and $(x_{i+1},t_{i+1})$ belongs to the same set $X\times \{t\}$ (in which case we call $[(x_i,t_{i}),\,(x_{i+1},t_{i+1})]$ a \emph{horizontal segment}) or they belong to the same orbit of the suspension flow (and then we call $[(x_i,t_{i}),\,(x_{i+1},t_{i+1})]$ a \emph{vertical segment}). The sequence $(z_i,t_i)_{i=1}^n$ is called a \emph{chain}  between $(x,t)$ and $(y,s)$. The length of a horizontal segment will be given by the distance $d_t$ between the points $(x_i,t_i)$ and $(x_{i+1},t_{i+1})$, while the length of a vertical segment will be the distance between its times in $[0,1]$. The length of a chain is defined as the sum of the length of its horizontal and vertical segments. 
We define a metric $d^f$ in $X^f$ as follows:
\[d^f((x,t),(y,s)) = \inf\left\{\text{length of all chains between } (x,t)\text{ and }(y,s)\right\}.\]
It is proved in \cite{Bw} that $d^{f}$ is indeed a metric on the suspension space $X^f$ that generates the quotient topology and such that the suspension flow is continuous.
We are finally ready to proceed to the proof of Theorem \ref{sus}. This proof follows the ideas of Thomas \cite{Th} in the case of the shadowing property and we will assume that the reader knows precisely Lemmas 2.4 and 2.5 there, since we will use them repeatedly but we will not state them here.

\begin{proof}[Proof of Theorem \ref{sus}]

It is enough to consider the suspension of $f$ under the map $\tau\equiv 1$ since the two-sided limit shadowing property is invariant by conjugacies \cite{Zhu}. Suppose that $f$ has the two-sided limit shadowing property and let $((x_k,\, s_k),\,t_k)_{k\in\mathbb{Z}}$ be a two-sided limit pseudo orbit of $\phi^f$ such that $2\leq t_k<4$ for every $k\in\Z$. We will prove that it is two-sided limit shadowed and this is enough to prove the two-sided limit shadowing property of $\phi^f$ as observed in \cite{Zhu} Proposition 2.2.
For each $k\in\Z$, let $w_k=\lfloor s_k+t_k\rfloor$ denote the integer part of $s_k+t_k$. Thus, $$\phi^f_{t_k}(x_k,s_k)=(f^{w_k}(x_k),s_k+t_k-w_k)$$ and since $((x_k,s_k),t_k)_{k\in\mathbb{Z}}$ is a two-sided limit pseudo orbit, it follows that
\[\lim_{|k|\to\infty}d^{f}((f^{w_k}(x_k),s_k+t_k-w_k),\, (x_{k+1},s_{k+1}))=0.\]
Then choose $M>0$ such that $$d^{f}((f^{w_k}(x_k),s_k+t_k-w_k),\, (x_{k+1},s_{k+1}))<\frac{1}{4} \,\,\,\,\,\, \text{whenever} \,\,\,\,\,\, |k|\geq M.$$
Then by Lemma 2.4 in \cite{Th} we have that either $\vert s_k+t_k-w_k-s_{k+1}\vert<\frac{1}{4}$ or $\vert 1+s_k+t_k-w_k-s_{k+1}\vert<\frac{1}{4}$ or $\vert 1+s_{k+1}+w_k-t_k-s_{k}\vert<\frac{1}{4}$, provided $|k|\geq M$.  Now, for each $k\in\Z$ define a positive integer $n_k$ as follows: if $|k|< M$ let $n_k=1$ and if $|k|\geq M$ let
$$
n_k = \left\{
\begin{array}{cl}
w_k &\mbox{if } \vert s_k+t_k-w_k-s_{k+1}\vert<\frac{1}{4},\\
w_k-1 &\mbox{if } \vert 1+s_k+t_k-w_k-s_{k+1}\vert<\frac{1}{4},\\
w_k+1 &\mbox{if } \vert 1+s_{k+1}-s_k-t_k+w_{k}\vert<\frac{1}{4}.
\end{array}\right.
$$
Lemmas 2.4 and 2.5 in \cite{Th} assure that the number $n_k$ is exactly the number satisfying $$\vert s_k+t_k-n_k-s_{k+1}\vert<\frac{1}{4} \,\,\,\,\,\, \text{and}$$ $$d(f^{n_k}(x_k),x_{k+1})<\frac{1}{4}  \,\,\,\,\,\, \text{whenever}  \,\,\,\,\,\, |k|\geq M.$$ 
Consider the sequence $(y_i)_{i\in\mathbb{Z}}\subset X$ defined as follows: for each $i\in\Z$ let
	\[y_i=
	f^{i-N_k}(x_k) \text{ if }\ N_k\leq i<N_{k+1}\]
	where $(N_k)_{k\in \mathbb{Z}}$ is the sequence of sums associated to $(n_k)_{k\in \mathbb{Z}}$. We are going to prove that $(y_i)_{i\in\Z}$ is a two-sided limit pseudo orbit of $f$. It is enough to prove that $$d(f^{n_k}(x_k),\,x_{k+1})\to0, \,\,\,\,\,\, \text{when} \,\,\,\,\,\, |k|\to\infty.$$ For each $0<\eps<\frac{1}{4}$, choose $\eps'>0$, given by Lemma 2.5 in \cite{Th},
and choose $N>0$ such that $$d^{f}((f^{w_k}(x_k),s_k+t_k-w_k),\, (x_{k+1},s_{k+1}))<\eps' \,\,\,\,\,\, \text{whenever} \,\,\,\,\,\, |k|\geq N.$$ The choice of $\eps'$ assures that 
$$\vert s_k+t_k-n_k-s_{k+1}\vert<\eps' \,\,\,\,\,\, \text{and}$$ $$d(f^{n_k}(x_k),x_{k+1})<\eps  \,\,\,\,\,\, \text{whenever}  \,\,\,\,\,\, |k|\geq N.$$ Since this can be done for each $\eps>0$, the claim follows and $(y_i)_{i\in\Z}$ is a two-sided limit pseudo orbit of $f$. Since $f$ has the two-sided limit shadowing property, there exists $x\in X$ satisfying $$\lim_{|i|\to+\infty}d(f^i(x),\,y_i)=0.$$  We claim that $(x,\,s_0)$ two-sided limit shadows $((x_k,\,s_k),\,t_k)_{k\in\mathbb{Z}}$. To prove this, we define a reparametrization $\alpha\in\text{Rep}$ and analyze for each $t\in\R$ the distance in the metric $d^f$ between the points $\phi^f_{\alpha(t)}(x,s_0)$ and $\phi^f_{t-T_k}(x_k,s_k)$, where $(T_k)_{k\in \mathbb{Z}}$ denotes the sequence of sums associated to $(t_k)_{k\in \mathbb{Z}}$. Define $\alpha\colon\mathbb{R}\to\mathbb{R}$  as follows:
	\[\alpha(t)=
	\frac{s_{k+1}+n_k-s_k}{t_k}(t-T_k)+s_k+N_k-s_0, \,\, \text{ if } \,\,\, T_k\leq t<T_{k+1}.
	\]
It is easy to check that $\alpha(0)=0$, $\alpha$ is piecewise linear and continuous. Also, $s_{k+1}+n_k-s_k>0$, since $n_k\geq 1$, so $\alpha$ is increasing and, hence, $\alpha\in \mathrm{Rep}$. If $k\in\Z$ and $T_k\leq t<T_{k+1}$, we choose $j\in\N$ such that $0\leq s_k+t-T_k-j<1$, and write 
$$\phi^f_{\alpha(t)}(x,s_0)=(f^{j+N_k}(x),s_0+\alpha(t)-N_k-j) \,\,\,\,\,\, \text{and}$$ $$\phi^f_{t-T_k}(x_k,s_k)=(f^j(x_k),s_k+t-T_k-j).$$
We will see below that it is enough to estimate the distance between both time and space coordinates of these points. For the time coordinates we proceed as follows: for each $\eps>0$, we choose $0<\eps'<\frac{\eps}{2}$ and $N>0$ (as before) such that $$\vert s_k+t_k-n_k-s_{k+1}\vert<\eps' \,\,\,\,\,\, \text{whenever}  \,\,\,\,\,\, |k|\geq N$$ and using the definition of $\alpha$ we obtain
	\begin{eqnarray*}
		\vert s_0+\alpha(t)-N_k-s_k-t+T_k\vert &=& \vert \alpha(t)-s_k-N_k+s_0-(t-T_k)\vert \\
		&=&\left\vert \frac{s_{k+1}+n_k-s_k-t_k}{t_k}(t-T_k)\right\vert\\
		&=& \vert s_{k+1}+n_k-s_k-t_k\vert\left\vert\frac{t-T_k}{t_k}\right\vert\\
		&<&\eps',
	\end{eqnarray*}
where the last inequality is assured by $0\leq t-T_k<t_k$. For the space coordinates we note that $$\lim_{|k|\to\infty}d(f^{N_k}(x),x_k)=0$$ since $x$ two-sided limit shadows $(y_i)_{i\in\Z}$ and that $j<5$ since $$s_k\in[0,1) \,\,\,\,\,\, \text{and} \,\,\,\,\,\, 0\leq t-T_k<t_k<4.$$ Using uniform continuity of $f$ one can prove that
$$\lim_{|k|\to\infty}d(f^{i+N_k}(x),f^i(x_k))=0, \,\,\,\,\,\, \text{for every} \,\,\,\,\,\, 0\leq i\leq5.$$ Then we can assume that $N$ was chosen such that 
$$d(f^{i+N_k}(x),f^i(x_k))<\frac{\eps}{2}  \,\,\,\,\,\, \text{whenever} \,\,\,\,\,\, 0\leq i\leq5 \,\,\,\,\,\, \text{and} \,\,\,\,\,\, |k|\geq N.$$ This holds, in particular, for $i=j$ and $i=j+1$.
After estimating the difference between both time and space coordinates of the points $\phi^f_{\alpha(t)}(x,s_0)$ and $\phi^f_{t-T_k}(x_k,s_k)$ we can prove they are $\eps$-close in the distance $d^f$. Indeed, $d^{f}(\phi^{f}_{\alpha(t)}(x,\,s_0),\phi^f_{t-T_k}(x_k,s_k))$ is equal to
$$d^{f}\left((f^{j+N_k}(x),s_0+\alpha(t)-N_k-j),\,(f^j(x_k),s_k+t-T_k-j)\right)$$ and with a triangle inequality we obtain that this is
\begin{eqnarray*}
		&\leq& d^{f}\left((f^{j+N_k}(x),\,s_0+\alpha(t)-N_k-j),\,(f^{j+N_k}(x_k),s_k+t-T_k-j)\right)\\
		&+&d^{f}\left((f^{j+N_k}(x),s_k+t-T_k-j),\,(f^{j}(x_k),\,s_k+t-T_k-j)\right).
\end{eqnarray*}
In the first term, the space coordinates are the same, so $d^f$ is just the difference between its time coordinates, while in the second term, the time coordinates are the same and $d^f$ is the distance in the level $s_k+t-T_k-j$. Then we obtain that 
\begin{eqnarray*}
		d^{f}(\phi^{f}_{\alpha(t)}(x,\,s_0),\phi^f_{t-T_k}(x_k,s_k))&\leq & \vert s_0+\alpha(t)-N_k-j-(s_k+t-T_k-j)\vert+\\
		& &+(s_k+t-T_k-j)d(f^{j+N_k+1}(x),\,f^{j+1}(x_k)) +\\
		& & +(1-s_k-t+T_k+j)d(f^{j+N_k}(x),f^j(x_k))\\
		&< & \eps^{\prime}+\tfrac{\eps}{2}(1-(s_k+t-T_k-j))+\tfrac{\eps}{2}(s_k+t-T_k-j)\\
		&\leq& \eps.
	\end{eqnarray*}
Thus, for each $\eps>0$ we found $N\in\N$ such that 
$$d^{f}(\phi^{f}_{\alpha(t)}(x,\,s_0),\phi^f_{t-T_k}(x_k,s_k))<\eps \,\,\,\,\,\, \text{whenever} \,\,\,\,\,\, T_k\leq t<T_{k+1} \,\,\,\,\,\, \text{and} \,\,\,\,\,\, |k|\geq N.$$ It follows that
	\[
	\lim_{|t|\to\infty}d^{f}(\phi^{f}_{\alpha(t)}(x,\,s_0),\phi^f_{t-T_k}(x_k,s_k))=0
	\]
	and, hence, $(x,\,s_0)$ two-sided limit shadow $((x_k,\,s_k),\,t_k)_{k\in\mathbb{Z}}$.




\end{proof}



\vspace{+0.4cm}

\section{Two-sided limit shadowing with a gap}

In this section, we prove Theorems \ref{gap} and \ref{equivalent} regarding the two-sided limit shadowing property with a gap. We begin with the definition of this property for homeomorphisms as in \cite{CK}. We say that a sequence $(x_i)_{i\in\Z}$ is \emph{two-sided limit shadowed with gap $K\in\Z$} if there exists
a point $y\in X$ satisfying
\begin{align*}
d(f^i(y),x_i)\to 0, \,\,\,\,\,\, i\to-\infty \,\,\,\,\,\, \text{and}\\
d(f^{K+i}(y),x_i)\to 0 \,\,\,\,\,\, i\to\infty.
\end{align*}
For $N\in\N_0$ we say that $f$ has the \emph{two-sided limit shadowing property with gap $N$} if every two-sided limit pseudo-orbit of $f$ is two sided limit shadowed with gap $K\in \Z$ with $|K|\le N$. We also say that $f$ has the \emph{two-sided limit shadowing property with a gap} if such an $N\in\N$ exists.

We prove that the two-sided limit shadowing property with a gap in the base homeomorphism is a necessary condition for its suspension flow admit the two-sided limit shadowing property.

\begin{theorem}\label{gapbase}
If $\phi^{f,\tau}$ has the two-sided limit shadowing property, then $f$ has the two-sided limit shadowing property with a gap.
\end{theorem}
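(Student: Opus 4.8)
I would argue contrapositively in spirit by building from the flow property down to the base. Throughout I work with the suspension of height $\tau\equiv 1$, which is legitimate since $\phi^{f,\tau}$ is conjugate to $\phi^f$ and the two-sided limit shadowing property of flows is conjugacy invariant \cite{Zhu}; also the gap property of the base homeomorphism $f$ does not involve $\tau$ at all. So assume $\phi^f$ on $X^f$ has the two-sided limit shadowing property and let $(x_i)_{i\in\Z}$ be a two-sided limit pseudo-orbit of $f$, i.e. $d(f(x_i),x_{i+1})\to 0$ as $|i|\to\infty$. The first step is to lift it to the flow: set $((x_i,0),1)_{i\in\Z}$. Here every time equals $1\geq 1$, and since $\phi^f_1(x_i,0)=(f(x_i),0)$ while a horizontal chain at level $0$ gives $d^f((f(x_i),0),(x_{i+1},0))\leq d(f(x_i),x_{i+1})$, this is a two-sided limit pseudo-orbit of $\phi^f$.

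Next I would apply the hypothesis to obtain $h\in\mathrm{Rep}$ and $(y,r)\in X^f$ (with $r\in[0,1)$) such that $d^f((x_0,0)\star t,\,\phi^f_{h(t)}(y,r))\to 0$ as $|t|\to\infty$, and then project this relation to the base by evaluating at integer times. Since all heights are $1$, the sum sequence is $s_i=i$ and $(x_0,0)\star i=(x_i,0)$, while $\phi^f_{h(i)}(y,r)=(f^{\lfloor r+h(i)\rfloor}(y),\,\{r+h(i)\})$. Applying Lemmas 2.4 and 2.5 of \cite{Th} exactly as in the proof of Theorem \ref{sus}, $d^f$-closeness to the level-$0$ point $(x_i,0)$ forces the fractional level $\{r+h(i)\}$ to converge to $0$ modulo $1$ and the base coordinate to converge to $x_i$; letting $p(i)$ be the integer nearest to $r+h(i)$, I get $d(f^{p(i)}(y),x_i)\to 0$ as $|i|\to\infty$.

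The third step turns this into a gap-shadowing relation. Because $h$ is increasing, $p$ is nondecreasing, and because the pseudo-orbit rises through exactly one level as $t$ increases by one while the tracking orbit stays $d^f$-close, no extra level crossings of the tracking orbit are possible once $|i|$ is large — this is precisely the crossing bookkeeping of Theorem \ref{sus} read in reverse, forcing $p(i+1)-p(i)=1$ for all large $|i|$. Hence the integer $p(i)-i$ is eventually constant, equal to some $a^-$ as $i\to-\infty$ and to some $a^+$ as $i\to+\infty$. Putting $\hat y:=f^{a^-}(y)$ and $K:=a^+-a^-$ then gives $d(f^i(\hat y),x_i)\to 0$ as $i\to-\infty$ and $d(f^{i+K}(\hat y),x_i)\to 0$ as $i\to+\infty$, so $(x_i)_{i\in\Z}$ is two-sided limit shadowed with gap $K$.

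The main obstacle is the final requirement that $K$ be bounded by a single $N$ independent of the chosen pseudo-orbit, as demanded by the definition of the two-sided limit shadowing property \emph{with a gap}. The integer produced above satisfies $K=\sum_{i\in\Z}\bigl[(p(i+1)-p(i))-1\bigr]$, and all of this winding is concentrated in the finite transition range where the tracking distance is still large, so monotonicity together with $h(0)=0$ only yields bounds that involve the (pseudo-orbit dependent) size of that range and does not close by itself. My plan is to remove this dependence by exploiting the freedom in the flow property in the spirit of Theorem \ref{equivalent}: since a gap in a flow-shadowing relation can be absorbed by reparametrizing, one can adjust the net integer drift of $h$ and reselect the base point, reducing $K$ to a bounded residue determined only by the constant height and the universal Bowen--Walters comparison constants, and thereby pin it into a fixed finite range (as the value $K=1$ forced in Example \ref{discrete} suggests). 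Making this reduction rigorous — in particular ruling out that the transition range can force arbitrarily large winding — is the heart of the proof.
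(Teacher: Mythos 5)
Your core argument is the same as the paper's: lift the base pseudo-orbit to a constant-time-one pseudo-orbit of the suspension at a fixed level, apply the flow's two-sided limit shadowing to get $(y,r)$ and $h$, and then use the level-crossing bookkeeping (Lemmas 2.4 and 2.5 of \cite{Th}) to show that once the tracking distance is small the shadowing orbit crosses exactly one level per unit of pseudo-orbit time, so that $p(i)-i$ stabilizes at some $a^-$ in the past and $a^+$ in the future, yielding a shadowing point with gap $a^+-a^-$. One small difference: the paper lifts to level $\tfrac{1}{2}$ rather than level $0$, which is not cosmetic — closeness to a level-$\tfrac{1}{2}$ point pins the time coordinate without the mod-$1$ ambiguity you have to resolve by hand with your ``nearest integer to $r+h(i)$'' device, and it is exactly the form in which Thomas's Lemma 2.5 applies cleanly. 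If you keep level $0$ you should verify that step more carefully.

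On the issue you flag as ``the heart of the proof'': your worry is understandable given that the definition of two-sided limit shadowing \emph{with a gap} demands a single $N$ bounding $|K|$ over all pseudo-orbits, but note that the paper's own proof does exactly what you did and no more — it produces, for each pseudo-orbit, the gap $N_1-2M-N_2$, whose constituents all depend on that pseudo-orbit, and then concludes. So you should not regard the uniformization as a step the paper resolves by some argument you are missing; it is either being read off from a uniformization statement in \cite{CK} or left implicit. More importantly, your proposed repair is wrong in principle: reparametrizing the flow, as in Theorem \ref{equivalent} or Lemma \ref{conserta}, changes only the time change $h$, not the orbit of $y$, and the integer drift $K=a^+-a^-$ records which $f$-iterates of $y$ actually track $x_i$ in the past versus the future; that is intrinsic to the orbit and cannot be altered by a time change, nor by replacing $y$ with another point of its orbit (which shifts $a^-$ and $a^+$ by the same amount and leaves $K$ fixed). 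So if a uniform bound is genuinely needed, it must come from a separate argument (e.g.\ a compactness or concatenation argument as in \cite{CK}), not from the flow-side reparametrization freedom.
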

\begin{proof}
	As in the previous proof, we can assume that $\tau\equiv 1$ and $\phi^f$ has the two-sided limit shadowing property. Let $(x_n)_{n\in\mathbb{Z}}$ be a two-sided limit pseudo-orbit of $f$. We claim that $((x_n,\frac{1}{2}),1)$ is a two-sided limit pseudo-orbit of $\phi^f$. First, note that $$\phi^f_1(x_n,\tfrac{1}{2})=(f(x_n),\tfrac{1}{2}), \,\,\,\forall n\in\N.$$ Now, for each $\eps>0$, choose $0<\delta<\eps$, given by uniform continuity of $f$, such that
	\[
	d(x,y)<\delta \,\,\, \Longrightarrow \,\,\, d(f(x),f(y))<\eps,\ \forall\,x,y\in X.
	\]
Since $(x_n)_{n\in\mathbb{Z}}$ is a two-sided limit pseudo-orbit, there exists $N\in\N$ such that $$d(f(x_n),x_{n+1})<\delta \,\,\,\,\,\, \text{whenever} \,\,\,\,\,\, |n|\geq N.$$ It follows, in this case, that 
	\begin{align*}
	d^f(\phi^f_1(x_n,\tfrac{1}{2}),(x_{n+1},\tfrac{1}{2}))&=d^f((f(x_n),\tfrac{1}{2}),(x_{n+1},\tfrac{1}{2}))\\
	&=\tfrac{1}{2}d(f(x_n),x_{n+1})+\tfrac{1}{2}d(f^2(x_n),f(x_{n+1}))\\
	&<\tfrac{1}{2}\delta+\tfrac{1}{2}\varepsilon=\varepsilon.
	\end{align*}
This proves the claim and the two-sided limit shadowing property of $\phi^f$ assures the existence of $(x,s)\in X^{1,f}$ and $h\in \mathrm{Rep}$ satisfying 
	\[
	\lim_{|t|\to+\infty}d^f((x_0,\tfrac{1}{2})\star t,\phi^f_{h(t)}(x,s))=0.
	\]
We will prove that some point in the orbit of $x$ two-sided limit shadows $(x_n)_{n\in\mathbb{Z}}$ with some gap. The idea is that after some time, the reparametrized orbit of $(x,s)$  is close to segments of orbit of time 1 for the suspension flow, so it behaves similarly to the suspension flow. This time will give us the gap in the definition of two-sided limit shadowing with a gap and after it the orbit of $x$ by the base homeomorphism $f$ will follow the pseudo orbit $(x_n)_{n\in\Z}$. To see this,  choose $T>0$ such that 
$$d^f((x_0,\tfrac{1}{2})\star t,\phi_{h(t)}^f(x,s))<\frac{1}{4} \,\,\,\,\,\, \text{whenever} \,\,\,\,\,\, |t|\geq T.$$
Let $M=\lfloor T+1\rfloor$ be the integer part of T+1 and put $t=M$ and $t=-M$ in the last inequality to obtain 
$$d^f((x_M,\tfrac{1}{2}),\phi^f_{h(M)}(x,s))<\frac{1}{4} \,\,\,\,\,\, \text{and} \,\,\,\,\,\, d^f((x_{-M},\tfrac{1}{2}),\phi^f_{h(-M)}(x,s))<\frac{1}{4}.$$ Let $N_1=\lfloor h(M)+s\rfloor$, $N_2=\lfloor h(-M)+s\rfloor$ and write 
$$\phi^f_{h(M)}(x,s)=(f^{N_1}(x),h(M)+s-N_1) \,\,\,\,\,\, \text{and}$$ 
$$\phi^f_{h(-M)}(x,s)=(f^{N_2}(x),h(-M)+s-N_2).$$
It follows that,
$$d^f((x_M,\tfrac{1}{2}),(f^{N_1}(x),h(M)+s-N_1))<\frac{1}{4} \,\,\,\,\,\, \text{and}$$
$$d^f((x_M,\tfrac{1}{2}),(f^{N_2}(x),h(-M)+s-N_2))<\frac{1}{4}.$$	
Since 
$$d^f(\phi_{t-M}(x_M,\tfrac{1}{2}),\phi^f_{h(t)}(x,s))<\frac{1}{4} \,\,\,\,\,\, \text{whenever} \,\,\,\,\,\, M\leq t<M+1 \,\,\,\,\,\, \text{and}$$ $$d^f(\phi_{t+M}(x_M,\tfrac{1}{2}),\phi^f_{h(t)}(x,s))<\frac{1}{4} \,\,\,\,\,\, \text{whenever} \,\,\,\,\,\, -M\leq t<-M+1$$ we can write $$\phi^f_{h(M+1)}(x,s)=(f^{N_1+1}(x),h(M+1)+s-N_1-1) \,\,\,\,\,\, \text{and}$$
$$\phi^f_{h(-M-1)}(x,s)=(f^{N_2-1}(x),h(-M-1)+s-N_2+1).$$ 
In a similar way, we can conclude that the following hold for every $n\in\N$:
$$\phi^f_{h(M+n)}(x,s)=(f^{N_1+n}(x),h(M+n)+s-N_1-n) \,\,\,\,\,\, \text{and}$$
$$\phi^f_{h(-M-n)}(x,s)=(f^{N_2-n}(x),h(-M-n)+s-N_2+n).$$
Since $(x,s)$ two-sided limit shadows $((x_n,\frac{1}{2}),1)$, we obtain
$$\lim_{n\to\infty}d^f((x_{M+n},\tfrac{1}{2}),(f^{N_1+n}(x),h(M+n)+s-N_1-n))=0 \,\,\,\,\,\, \text{and}$$
$$\lim_{n\to\infty}d^f((x_{-M-n},\tfrac{1}{2}),(f^{N_2-n}(x),h(-M-n)+s-N_2+n))=0.$$
These imply, in particular, that $$\lim_{n\to\infty}d(x_{M+n}, f^{n+N_1}(x))=0 \,\,\,\,\,\, \text{and} \,\,\,\,\,\, \lim_{n\to\infty}d(x_{-M-n}, f^{N_2-n}(x))=0.$$
Indeed, for each $\varepsilon>0$ choose $0<\varepsilon'<\frac{1}{4}$ as in Lemma 2.5 of \cite{Th} and $N\in\N$ such that 
$$d^f((x_{M+n},\tfrac{1}{2}),(f^{N_1+n}(x),h(M+n)+s-N_1-n))<\varepsilon' \,\,\,\,\,\, \text{and}$$ 
$$d^f((x_{-M-n},\tfrac{1}{2}),(f^{N_2-n}(x),h(-M-n)+s-N_2+n))<\varepsilon'$$
whenever $|n|\geq N$.
It follows that
$$|h(M+n)+s-N_1-n-\tfrac{1}{2}|<\varepsilon' \,\,\,\,\,\, \text{and} \,\,\,\,\,\, |h(-M-n)+s-N_2+n-\tfrac{1}{2}|<\varepsilon'$$
so Lemma 2.5 in \cite{Th} assures that
$$d(x_{M+n},f^{N_1+n}(x))<\varepsilon \,\,\,\,\,\, \text{and} \,\,\,\,\,\, d(x_{-M-n},f^{N_2-n}(x))<\varepsilon.$$ To conclude, observe that
 $$\lim_{n\to\infty}d(x_{n}, f^{n+N_1-M}(x))=0 \,\,\,\,\,\, \text{and} \,\,\,\,\,\, \lim_{n\to\infty}d(x_{-n}, f^{N_2-n+M}(x))=0$$ and that this imply that $f^{N_2+M}(x)$ two-sided limit shadows $(x_n)_{n\in\Z}$ with gap $N_1-2M-N_2$.
\end{proof}


Now we turn our attention to the two-sided limit shadowing property with a gap for continuous flows, as defined in the introduction. We prove that the two-sided limit shadowing property is equivalent to the two-sided limit shadowing property with a gap in this case. The idea is that when a shadowing orbit has a gap in its shadowing relation we can reparametrize the flow to remove this gap. The following is a technical lemma that we use to define the above mentioned reparametrization

\begin{lemma}\label{conserta}
If $z\in X$, $K\in\R$ and $h\in\text{Rep}$, then there exists $\alpha\in\text{Rep}$ such that $\alpha(t)=h(t)$ for every $t\leq0$ and
$$d(\phi_{h(t)+K}(z), \phi_{\alpha(t)}(z))\to0 \,\,\,\,\,\, \text{when} \,\,\,\,\,\, t\to\infty.$$
\end{lemma}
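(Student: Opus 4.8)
The plan is to construct $\alpha$ completely explicitly, by interpolating between $h$ on the negative axis (where $\alpha$ is forced to equal $h$) and the shifted reparametrization $t\mapsto h(t)+K$ on a far-out ray, performing the whole transition on one compact interval $[0,T_0]$ that is chosen long enough to keep $\alpha$ increasing. The key realization is that I need not settle for $\phi_{\alpha(t)}(z)$ being merely asymptotic to $\phi_{h(t)+K}(z)$: I can force the two orbits to coincide \emph{exactly} for all large $t$, so the distance is eventually $0$ and the limit is trivial. This also means the argument uses nothing about $X$ beyond it being a metric space.

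Concretely, I would first use $h(t)\to+\infty$ as $t\to\infty$ to pick $T_0>0$ with $h(T_0)+K>0$ (any $T_0>0$ works when $K\geq0$, and when $K<0$ one takes $T_0$ with $h(T_0)>|K|$). Then define $\alpha(t)=h(t)$ for $t\leq0$, $\alpha(t)=\tfrac{h(T_0)+K}{T_0}\,t$ for $0\leq t\leq T_0$, and $\alpha(t)=h(t)+K$ for $t\geq T_0$. The verification that $\alpha\in\mathrm{Rep}$ is routine: the three pieces agree at the junctions $t=0$ (both equal $0$) and $t=T_0$ (both equal $h(T_0)+K$), so $\alpha$ is continuous with $\alpha(0)=0$; each piece is strictly increasing (the middle slope $(h(T_0)+K)/T_0$ is positive by the choice of $T_0$, and $h$ and $h+K$ are strictly increasing), so $\alpha$ is strictly increasing on $\mathbb{R}$; and $\alpha(t)\to\pm\infty$ as $t\to\pm\infty$, so $\alpha$ is a homeomorphism of $\mathbb{R}$. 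By construction $\alpha\equiv h$ on $(-\infty,0]$.

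The conclusion is then immediate: since $\alpha(t)=h(t)+K$ for every $t\geq T_0$, we have $\phi_{\alpha(t)}(z)=\phi_{h(t)+K}(z)$ and hence $d(\phi_{h(t)+K}(z),\phi_{\alpha(t)}(z))=0$ for all $t\geq T_0$, so this quantity certainly tends to $0$ as $t\to\infty$.

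The one genuine obstacle is monotonicity across the transition, and it is entirely concentrated in the case $K<0$: there the target $t\mapsto h(t)+K$ lies \emph{below} $h$, so one cannot simply add a small decreasing correction to $h$ on $[0,T_0]$ without risking that $\alpha$ decreases somewhere, since $h$ may increase arbitrarily slowly on subintervals and would not dominate the downward correction. Replacing "$h$ plus a bump" by a straight line on $[0,T_0]$ with endpoint value $h(T_0)+K$ circumvents this cleanly, because a straight line from $(0,0)$ is increasing as soon as its endpoint is positive — and that is exactly what taking $T_0$ large guarantees. I would remark that this avoids any appeal to uniform continuity of $\phi$ and works for an arbitrary metric space $X$.
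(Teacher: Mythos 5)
Your proof is correct, and it is essentially the paper's own construction for the case $K<0$, promoted to a uniform treatment of both signs of $K$. In the paper, for $K<0$ the authors choose $t_0$ with $h(t_0)+K=1$ and define $\alpha$ to be $h$ on $(-\infty,0]$, the line $t/t_0$ on $[0,t_0]$, and $h+K$ beyond $t_0$, so that $\phi_{\alpha(t)}(z)$ and $\phi_{h(t)+K}(z)$ coincide exactly for large $t$ — exactly your interpolation, with the inessential normalization $h(T_0)+K=1$ in place of your $h(T_0)+K>0$. For $K>0$, however, the paper argues differently: it perturbs $h$ by the smooth increasing bump $k(t)=Ke^{-1/t^2}$, which only reaches $K$ asymptotically, so the conclusion there rests on continuity of the flow (and, implicitly, on compactness of $X$ so that $\phi_s\to\mathrm{id}$ uniformly as $s\to 0$). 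Your single linear interpolation buys three small improvements: no case split, a distance that is eventually identically zero rather than merely tending to zero, and no hypothesis on $\phi$ or $X$ beyond $h\in\mathrm{Rep}$. The one delicate point — that the middle piece be increasing, i.e.\ that $h(T_0)+K>0$ — you handle correctly by taking $T_0$ large when $K<0$, which is precisely the role played by the choice of $t_0$ in the paper.
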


\begin{proof}
We split the proof in the cases $K>0$ and $K<0$. In the first case, define $\alpha(t)=h(t)+k(t)$, where $k\colon\R\to\R$ is defined by $$k(t)=\begin{cases}0& t\leq0\\
Ke^{-\frac{1}{t^2}}& t> 0.\end{cases}$$

\begin{center}
	\begin{tikzpicture}[line cap=round,line join=round,>=triangle 45,x=1.0cm,y=1.0cm]
	\draw[->,color=black] (-0.92,0) -- (4.39,0);
	\foreach \x in {-0.5,0.5,1,1.5,2,2.5,3,3.5,4}
	\draw[shift={(\x,0)},color=black] (0pt,-2pt);
	\draw[color=black] (4.28,0.03) node [anchor=south west] { $t$};
	\draw[->,color=black] (0,-0.8) -- (0,2.7);
	
	\draw[color=black] (0.03,2.04) node [anchor= south west] { $k$};
	\clip(-0.92,-1.5) rectangle (4.39,2.17);
	\draw [domain=-0.92:4.39] plot(\x,{(--2-0*\x)/1});
	\draw[smooth,samples=100,domain=-0.9182716975672461:-2.0492660421095513E-6] plot(\x,{0});
	\draw[smooth,samples=100,domain=0.009:4.388909088342245] plot(\x,{2*2.718281828^(-1/(\x)^2)});
	\draw (0.02,1.98) node[anchor=north east] {$K$};
	\end{tikzpicture}
\end{center}
Note that $\alpha(t)=h(t)$ for every $t\leq0$ since $k(t)=0$ in this case. Also, $\alpha\in\text{Rep}$ since $h\in\text{Rep}$, $k(0)=0$, $k$ is continuous and increasing in its positive part. Moreover, $k(t)$ converge to $K$, when $t\to+\infty$, since $e^{-\frac{1}{t^2}}$ converge to $1$, so the convergence in the lemma follows from the continuity of $\phi$. In the second case, we use the fact that $h\in\text{Rep}$ to obtain $t_0>0$ such that $h(t_0)+K=1$ and define $\alpha\colon\R\to\R$ by 
$$\alpha(t)=\begin{cases}h(t),& t\leq0\\
\frac{t}{t_0},& 0<t\leq t_0,\\
h(t)+K,& t>t_0.\end{cases}$$
\begin{center}
		\begin{tikzpicture}[line cap=round,line join=round,>=triangle 45,x=1.0cm,y=1.0cm]
	\draw[->,color=black] (-3.44,0) -- (5,0);
	\foreach \x in {-3,-2,-1,1,2,3,4,5}
	\draw[shift={(\x,0)},color=black] (0pt,-2pt);
	\draw[color=black] (4.74,0.07) node [anchor=south west] { t};
	\draw[->,color=black] (0,-2.87) -- (0,4.88);
	
	\draw[color=black] (0.09,4.53) node [anchor=west] { $\alpha$};
	\clip(-3.44,-2.87) rectangle (5,4.88);
	\draw[smooth,samples=100,domain=-3.436273585928:-3.826350615457983E-6] plot(\x,{(2.718281828^(\x)-2.718281828^(-(\x)))/2});
	\draw[smooth,samples=100,domain=1.8184472545264312:5.002523561357201] plot(\x,{((2.718281828^(\x)-2.718281828^(-(\x)))/2-2)});
	\draw[smooth,samples=100,domain=4.612446531827218E-6:1.818445262970305] plot(\x,{(\x)/ln(3+sqrt(10))});
	\draw [dash pattern=on 2pt off 2pt] (1.8,0.99)-- (1.8,0);
	\draw [dash pattern=on 2pt off 2pt] (1.8,0.98)-- (0,0.95);
	\draw (1.75,-0.18) node[anchor=north west] {t};
	\draw (-0.4,1.2) node[anchor=north west] {1};
	\draw (-2,-0.85) node[anchor=north west] {$h(t)$};
	\draw (2.3,2.58) node[anchor=north west] {$h(t)+K$};
	\end{tikzpicture}
\end{center}

The choice of $t_0$ assures that $\alpha\in\text{Rep}$ and the other properties of the lemma follow directly from the definition of $\alpha$.
\end{proof}


\begin{proof}[Proof of Theorem \ref{equivalent}]
Suppose that $\phi$ has the two-sided limit shadowing property with a gap. Let $(x_k,t_k)_{k\in\Z}$ be a two-sided limit pseudo orbit of $\phi$ and suppose that $z\in X$, $K\in\R$ and $h\in\text{Rep}$ satisfy 
$$d(x_0\star t, \phi_{h(t)}(z))\to0, \,\,\,\,\,\, t\to-\infty \,\,\,\,\,\, \text{and}$$
$$d(x_0\star t, \phi_{h(t)+K}(z))\to0, \,\,\,\,\,\, t\to+\infty.$$
Consider $\alpha\in\text{Rep}$ given by the previous lemma. Then $$d(x_0\star t, \phi_{\alpha(t)}(z))\to0, \,\,\,\,\,\, t\to-\infty$$ since $\alpha(t)=h(t)$ for $t\leq0$, while for $t>0$ it follows that
\begin{align*}
d(x_0\star t,\phi_{\alpha(t)}(z))\leq d(x_0\star t,\phi_{h(t)+K}(z))+d(\phi_{h(t)+K}(z),\phi_{\alpha(t)}(z))
\end{align*}
where these two terms converge to zero when $t\to\infty$.
\end{proof}

\begin{remark}
To finish the proof of Theorem \ref{gap} one just need to prove that the suspension of a homeomorphism satisfying the two-sided limit shadowing property with a gap also satisfies the two-sided limit shadowing property with a gap. The calculations are very similar to the proof of Theorem \ref{sus} for the case with no gaps and we leave it as an exercise.
\end{remark}

\vspace{+0.4cm}

\section{Shadowing and transitivity}

In this section, we prove Theorem \ref{transha} where transitivity and the finite shadowing property are proved assuming two-sided limit shadowing. The arguments follow the arguments in the case of homeomorphisms in \cite{CK} with suitable adaptations. We recall necessary definitions. A flow is said to be {\em transitive}, if there is $x\in X$ whose omega limit set
$$\omega(x):=\{y\in X\colon y=\lim_{t_k\to+\infty}\phi_{t_k}(x)\ \text{for some sequence }t_k\to +\infty\}$$ 
equals the whole space. To define the shadowing property, let $\eps>0$ and $\delta>0$ be given and consider a sequence of pairs $(x_n,t_n)_{n\in\mathbb{Z}}\subset X\times\R^+$. This sequence is called a $\delta$-pseudo-orbit of $\phi$ if $t_n\geq1$ and 
$$d(\phi_{t_n}(x_n),x_{n+1})\leq\delta\,\,\,\,\,\, \text{for every} \,\,\,\,\,\,n\in\mathbb{Z}.$$
We say that this sequence is $\eps$-shadowed if there exist $h\in\mathrm{Rep}$ and $y\in X$ satisfying
$$d(x_0\star t,\phi_{h(t)})\leq \eps,\ \forall\,t\in\mathbb{R}.$$
The flow $\phi$ has the {\em shadowing property} if for each $\varepsilon>0$ there exists $\delta>0$ such that every $\delta$-pseudo-orbit is $\varepsilon$-shadowed. As in \cite{K} we say that $\phi$ has the \emph{finite shadowing property} if for each $\varepsilon>0$, there exists $\delta>0$ such that every finite $\delta$-pseudo-orbit is $\varepsilon$-shadowed. Finally, the flow is called \emph{chain-transitive} if for each pair of points $x,y\in X$ and each $\eps>0$, there exists a finite $\eps$-pseudo orbit starting at $x$ and ending at $y$.

\begin{lemma}\label{fshadowing}
If $\phi$ is chain transitive and has the limit shadowing property, then it has the finite shadowing property.
\end{lemma}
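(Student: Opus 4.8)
The plan is to argue by contradiction, building from a hypothetical failure of finite shadowing a single one-sided limit pseudo-orbit whose far tail contains arbitrarily accurate yet unshadowable finite pseudo-orbits, and then to contradict the limit shadowing property. First I would assume $\phi$ does \emph{not} have the finite shadowing property. Then there is a fixed $\eps_0>0$ such that for every $n\in\N$ there is a finite $\tfrac1n$-pseudo-orbit $P_n$ that is not $\eps_0$-shadowed; write $P_n$ as a block of pairs $(x^n_0,t^n_0),\dots,(x^n_{m_n},t^n_{m_n})$ with endpoint $x^n_{m_n}$ and total duration $s^n_{m_n}$.

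Next I would use chain transitivity to glue these blocks into one infinite pseudo-orbit. Between the endpoint $x^n_{m_n}$ of $P_n$ and the initial point $x^{n+1}_0$ of $P_{n+1}$, chain transitivity provides a finite $\tfrac1n$-pseudo-orbit (a bridge) starting exactly at $x^n_{m_n}$ and ending exactly at $x^{n+1}_0$. Since such chains use times $\ge 1$ and have one-step errors $\le\tfrac1n$, the concatenation $P_1,\text{bridge}_1,P_2,\text{bridge}_2,\dots$ yields a sequence $(X_i,T_i)_{i\ge 0}$ with $T_i\ge 1$ and one-step errors tending to $0$ as $i\to\infty$ (the error at any step lying in the $n$-th block or bridge is $\le\tfrac1n$). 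Hence it is a one-sided limit pseudo-orbit, and the limit shadowing property furnishes $h\in\mathrm{Rep}$ and $y\in X$ with $\lim_{t\to+\infty}d(X_0\star t,\,\phi_{h(t)}(y))=0$.

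The key observation is that each $P_n$ sits as a \emph{contiguous} sub-block of the concatenation, starting at some accumulated time $a_n$, and that on its time window one has $X_0\star(a_n+\tau)=x^n_0\star\tau$ for $\tau\in[0,s^n_{m_n}]$, because the $\star$-curve of the concatenation, restricted to that window, is exactly the $\star$-curve of $P_n$ computed from its own sums. I would then choose $T$ with $d(X_0\star t,\phi_{h(t)}(y))<\eps_0$ for all $t\ge T$, and, since $a_n\to\infty$, fix some $n$ with $a_n\ge T$. Setting $\tilde h(\tau)=h(a_n+\tau)-h(a_n)$ (which lies in $\mathrm{Rep}$) and $\tilde y=\phi_{h(a_n)}(y)$ gives $\phi_{\tilde h(\tau)}(\tilde y)=\phi_{h(a_n+\tau)}(y)$, so that $d(x^n_0\star\tau,\phi_{\tilde h(\tau)}(\tilde y))<\eps_0$ for all $\tau\in[0,s^n_{m_n}]$. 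Thus $P_n$ is $\eps_0$-shadowed, contradicting its choice, and finite shadowing must hold.

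The main obstacle is the bookkeeping in this last step: one must verify carefully that $\tilde h$ genuinely belongs to $\mathrm{Rep}$ (increasing, continuous, fixing $0$) and that the restriction of the global $\star$-curve coincides with the $\star$-curve of the isolated finite block, so that the purely asymptotic shadowing of the whole limit pseudo-orbit descends to an honest uniform $\eps_0$-shadowing on each far-out block. The remaining ingredients—arranging times $\ge 1$ throughout and one-step errors tending to $0$—are routine given the definitions of chain transitivity and of finite $\delta$-pseudo-orbit.
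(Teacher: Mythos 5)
Your proposal is correct and follows essentially the same route as the paper: assume finite shadowing fails, use chain transitivity to concatenate the unshadowable blocks with bridges into a single limit pseudo-orbit, apply limit shadowing, and then transport the asymptotic shadowing back to a far-out block via the shifted reparametrization $\tilde h(\tau)=h(a_n+\tau)-h(a_n)$ and the point $\phi_{h(a_n)}(y)$. The paper's proof uses exactly this construction (with $g(t)=h(t+s_N)-h(s_N)$ and shadowing point $\phi_{h(s_N)}(q)$), so there is nothing further to add.
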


\begin{proof}
If this is not the case, then there exists $\eps>0$ such that for each $n\in\mathbb{N}$ there is a finite $\frac{1}{n}$-pseudo-orbit $\alpha_n$  that is not $\eps$-shadowed. By chain transitivity, for each $n$ there is a finite $\frac{1}{n}$-pseudo orbit $\beta_n$ such that the concatenated sequence $\alpha_n\beta_n\alpha_{n+1}$ forms a finite $\frac{1}{n}$-pseudo-orbit of $\phi$. Thus, the sequence
$$\alpha_1\beta_1\alpha_2\beta_2\alpha_3\beta_3\cdots$$
is a limit pseudo orbit of $\phi$. Write this pseudo-orbit as $(x_i,\,t_i)_{i\in\mathbb{N}}\subset X\times\R$. The limit shadowing property assures the existence of $h\in\mathrm{Rep}$ and $q\in X$ satisfying
$$\lim_{t\to+\infty}d(x_0\star t,\,\phi_{h(t)}(q)) = 0.$$
Let $(s_n)_{n\in\N}$ be the sequence of sums associated to $(t_n)_{n\in\N}$ and choose $N\in\mathbb{N}$ such that 
$$d(\phi_{t-s_n}(x_n),\,\phi_{h(t)}(q))\leq\eps\,\,\,\,\,\, \text{whenever} \,\,\,\,\,\, s_n\leq t < s_{n+1} \,\,\,\,\,\, \text{and} \,\,\,\,\,\, n\geq N.$$ 
For each $n\geq N$, let $r_n=s_n-s_N$ and define $g\in\text{Rep}$ as 
$$g(t)=h(t+s_N)-h(s_N) \,\,\,\,\,\, \text{whenever} \,\,\,\,\,\, r_n\leq t<r_{n+1} \,\,\,\,\,\, \text{and} \,\,\,\,\,\, n\in\N.$$ 
Then $(x_n,\,t_n)_{n=N}^\infty$ can be $\eps$-shadowed by $\phi_{h(s_N)}(q)$ with the reparametrization $g$. Indeed, if $n\geq N$ and $r_n\leq t<r_{n+1}$, then $s_n\leq t+s_N<s_{n+1}$ and
$$d(x_N\star t,\,\phi_{g(t)}(\phi_{h(s_N)}(q))) = d(x_0\star(t+s_N),\,\phi_{h(t+s_N)}(q))\leq\eps.$$
But this implies that some $\alpha_n$ can be $\eps$-shadowed contradicting the assumption.

\end{proof}

The following is the analogue of Lemma 3.2 in \cite{CK} in the case of flows.

\begin{lemma}\label{chain}
If a continuous flow has the two-sided limit shadowing property, then it is chain transitive.	
\end{lemma}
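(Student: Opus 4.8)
The plan is to show that any two points $x,y \in X$ can be joined by a finite $\epsilon$-pseudo-orbit, for every $\epsilon > 0$. The natural strategy is to build a bi-infinite two-sided limit pseudo-orbit that repeatedly visits $x$ in the distant past and $y$ in the distant future, feed it into the two-sided limit shadowing property to obtain a shadowing point and reparametrization, and then extract from the genuine orbit of that point a finite piece that approximately connects $x$ to $y$. Concretely, I would fix $\epsilon>0$ and consider the constant sequence built from blocks of $x$'s for all sufficiently negative indices and blocks of $y$'s for all sufficiently positive indices, with a single transition step in the middle linking some $\phi_{t}(x)$ to $y$ (the transition step need not be small because the limit condition only constrains behavior as $|i|\to\infty$). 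Setting the times $t_i \equiv 1$, say, this sequence automatically satisfies $\lim_{|i|\to\infty} d(\phi_{t_i}(x_i), x_{i+1}) = 0$ since the error is exactly zero for all large $|i|$ (because $\phi_1(x)$ is typically not $x$, I should instead choose the pseudo-orbit so that its tails are genuine orbit segments of $x$ in the past and of $y$ in the future, i.e.\ set $x_i = \phi_{i}(x_0)$ for $i \le 0$ and $x_i = \phi_{i}(y_0)$ for $i \ge 1$, which makes the future and past errors vanish identically).

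Once the pseudo-orbit is in place, the two-sided limit shadowing property yields $p \in X$ and $h \in \mathrm{Rep}$ with $d(x_0\star t, \phi_{h(t)}(p)) \to 0$ as $|t|\to\infty$. The next step is to read off transitivity-type information: as $t\to-\infty$ the orbit of $p$ (reparametrized) becomes arbitrarily close to the backward orbit of $x$, so in particular $\phi_{h(t)}(p)$ comes $\epsilon$-close to $x$ for some large negative $t$; symmetrically, as $t\to+\infty$ it comes $\epsilon$-close to $y$. Thus the finite segment of the genuine orbit $\{\phi_s(p) : h(t_-) \le s \le h(t_+)\}$ starts $\epsilon$-close to $x$ and ends $\epsilon$-close to $y$. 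The final step is to sample this orbit segment at times spaced by at least $1$ (to respect the $t_n \ge 1$ requirement in the definition of pseudo-orbit) and prepend the jump from $x$ to the starting sample and append the jump from the ending sample to $y$; since consecutive samples lie on the same genuine orbit the internal errors are zero, and the two end-jumps are $\epsilon$-small by construction. This produces a finite $\epsilon$-pseudo-orbit from $x$ to $y$, giving chain transitivity.

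The main obstacle I anticipate is bookkeeping with the reparametrization $h$ and with the $\star$-notation: translating the statement ``$\phi_{h(t)}(p)$ is close to $x_0 \star t$'' into a statement about genuine orbit points at controlled real times requires care, because $h$ is only an increasing homeomorphism of $\mathbb{R}$ and the relationship between the index-$i$ structure of the pseudo-orbit and the continuous time $t$ is mediated by the sequence of sums $(s_i)$. I would handle this by noting that $x_0 \star t = \phi_{t-s_i}(x_i)$ on $[s_i, s_{i+1})$, so for $t$ in the range where $x_i$ belongs to the past block $x_i = \phi_i(x_0)$ we get $x_0\star t = \phi_t(x_0)$ exactly, reducing the closeness statement to $d(\phi_t(x_0), \phi_{h(t)}(p)) \to 0$ and similarly in the future. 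A secondary point to check is the $t_n \ge 1$ constraint when sampling the orbit segment, but this is resolved simply by choosing the sampling step equal to $1$, which is harmless since the orbit has no fixed points issue here (we are only asserting chain transitivity, which makes no assumption about absence of singularities).
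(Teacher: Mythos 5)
There is a genuine gap, and it sits exactly where you wave your hands: the assertion that ``as $t\to-\infty$ the orbit of $p$ becomes arbitrarily close to the backward orbit of $x$, so in particular $\phi_{h(t)}(p)$ comes $\varepsilon$-close to $x$ for some large negative $t$.'' With your choice of pseudo-orbit ($x_i=\phi_i(x)$ for $i\le 0$, $x_i=\phi_i(y)$ for $i\ge 1$), the shadowing conclusion is $d(\phi_t(x),\phi_{h(t)}(p))\to 0$ as $t\to-\infty$: the orbit of $p$ tracks the point $\phi_t(x)$, which for large negative $t$ recedes along the backward orbit of $x$ and in general is nowhere near $x$ itself. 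Nothing forces $\phi_{h(t)}(p)$ to return to $x$. What your construction actually yields is a finite $\varepsilon$-chain from $\phi_{t_-}(x)$ (a point far in the \emph{past} of $x$) to $\phi_{t_+}(y)$ (a point far in the \emph{future} of $y$). Since pseudo-orbits must respect the forward time direction ($t_n\ge 1$), the genuine orbit segments at your disposal run from $\phi_{t_-}(x)$ \emph{to} $x$ and from $y$ \emph{to} $\phi_{t_+}(y)$ --- both pointing the wrong way --- so no chain from $x$ to $y$ can be assembled from these pieces.

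The paper's proof repairs precisely this defect by not using the orbits of $x$ and $y$ as the tails of the pseudo-orbit. Instead it picks $z_1\in\omega(x)$ and $z_2\in\alpha(y)$ and takes the past tail to be the backward orbit of $z_1$ and the future tail to be the forward orbit of $z_2$. Invariance of $\omega(x)$ guarantees $\phi_{-T}(z_1)\in\omega(x)$, so some \emph{forward} orbit point $\phi_{T'}(x)$ is $\varepsilon/2$-close to it, and hence $\varepsilon$-close to $\phi_{h(-T)}(z)$; symmetrically $\phi_{T}(z_2)\in\alpha(y)$ is approximated by a \emph{backward} orbit point $\phi_{-T'}(y)$. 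One then chains $x\to\phi_{T'}(x)$ (genuine orbit), jumps onto the shadowing orbit, follows it from $\phi_{h(-T)}(z)$ to $\phi_{h(T)}(z)$, jumps to $\phi_{-T'}(y)$, and flows forward to $y$ --- every segment now pointing forward in time. If you want to keep your setup, you must replace the tails of your pseudo-orbit by orbits of points in $\omega(x)$ and $\alpha(y)$ in this way; the rest of your bookkeeping (the $\star$-notation, the sampling with step $1$, the two end-jumps) is fine.
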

\begin{proof}
For each $x,y\in X$ and $\eps>0$ we will prove the existence of an $\eps$-pseudo orbit starting at $x$ and ending at $y$. Choose $z_1\in\omega(x)$ and $z_2\in\alpha(y)$ and consider the sequence $(x_n,t_n)_{n\in\mathbb{Z}}$ defined as follows:
	\[
	(x_n,t_n)=
	\begin{cases}
		(\phi_{n}(z_2),1) &\text{if }n\geq0,\\
		(\phi_{n}(z_1),1)&\text{if }n<0.
	\end{cases}
	\]
It is clear that $(x_n,t_n)_{n\in\mathbb{Z}}$ is a two-sided limit pseudo-orbit of $\phi$ and, hence, there are $z\in X$ and $h\in\mathrm{Rep}$ such that 
	\[
	\lim_{|t|\to\infty}d(x_0\star t,\phi_{h(t)}(z))=0.
	\]
In particular, there exists $T\geq 1$ such that if $t\geq T$, then
$$d(\phi_{-t}(z_1),\phi_{h(-t)}(z))<\frac{\eps}{2}\ \text{ and }\ d(\phi_t(z_2),\phi_{h(t)}(z))<\frac{\eps}{2}.$$
Suppose that $T$ was also chosen satisfying $h(T)-h(-T)\geq1$.
Since $\omega(x)$ and $\alpha(y)$ are invariant under $\phi$ it follows that $\phi_{-T}(z_1)\in \omega(x)$ and $\phi_{T}(z_2)\in \alpha(y)$.  Choose $T'>T$ such that
	  \[d(\phi_{-T}(z_1),\phi_{T'}(x))<\frac{\eps}{2}\  \text{ and }\  d(\phi_{T}(z_2),\phi_{-T'}(y))<\frac{\eps}{2}.\]
Therefore
	   \[d(\phi_{T'}(x),\phi_{h(-T)}(z))<\eps\  \text{ and }\  d(\phi_{-T'}(y),\phi_{h(T)}(z))<\eps.\]
Now we define a $\eps$-pseudo orbit from $x$ to $y$ following the orbit of $x$ till $\phi_{T'}(x)$, then the orbit of $\phi_{h(-T)}(z)$ up to $\phi_{h(T)}(z)$ and then the orbit of $\phi_{-T'}(y)$ until $y$. This finishes the proof.
\end{proof}

The singular suspension flow of a homeomorphism with the shadowing property is an example of a continuous flow with a fixed point, satisfying the finite shadowing property but not the shadowing property (see \cite{K}). It is a classical result that flows with the shadowing property are chain-transitive if, and only if, they are transitive. Since the two-sided limit shadowing property implies just the finite shadowing property, transitivity does not follow immediately Instead, we prove transitivity using the limit shadowing property as in the next result.

\begin{lemma}\label{chain2}
If a continuous flow has the limit shadowing property, then it is chain transitive if, and only if, it is transitive.
\end{lemma}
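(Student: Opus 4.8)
The plan is to prove Lemma~\ref{chain2} by establishing the two directions separately. One direction is a standard soft fact requiring no hypothesis: every transitive flow is chain transitive. Indeed, if $x\in X$ has dense orbit, then for any $p,q\in X$ and any $\eps>0$ we may follow the orbit of $x$ forward from a time when it is $\eps$-close to $p$ until a later time when it is $\eps$-close to $q$, and a single jump of size $<\eps$ at each endpoint produces a finite $\eps$-pseudo-orbit from $p$ to $q$. This uses only the density of $\mathcal{O}(x)$ and transitivity is preserved under taking any point in $\omega(x)$, so I would dispatch this direction in one short paragraph.

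The substance is the converse: chain transitivity together with the limit shadowing property forces transitivity. First I would note that chain transitivity lets us concatenate finite $\tfrac1n$-pseudo-orbits linking a prescribed countable dense set (possible by compactness of $X$) into a single one-sided limit pseudo-orbit indexed by $\N$, exactly as in the construction inside the proof of Lemma~\ref{fshadowing}. Concretely, I would fix a dense sequence $(p_j)_{j\in\N}$ and, for each $n$, insert via chain transitivity a finite $\tfrac1n$-pseudo-orbit bridging the end of the previous block to $p_{n}$ and holding near $p_n$ for a long time, so that the errors tend to zero and every $p_j$ is approximated along the pseudo-orbit by genuine flow segments of length $\ge 1$. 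This yields a limit pseudo-orbit $(x_i,t_i)_{i\in\N}$ whose forward itinerary visits every $\eps$-neighborhood of every $p_j$.

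Then I would invoke the limit shadowing property to produce $q\in X$ and $h\in\mathrm{Rep}$ with $d(x_0\star t,\phi_{h(t)}(q))\to0$ as $t\to+\infty$. Because the asymptotic error vanishes, the shadowing point $q$ inherits the recurrence of the pseudo-orbit: for each $j$ and each $\eps>0$ there are arbitrarily large times $t$ with $x_0\star t$ within $\eps$ of $p_j$, and for such $t$ the point $\phi_{h(t)}(q)$ is within $2\eps$ of $p_j$ once $t$ is large enough. Hence $\omega(q)$ meets every $\eps$-ball around every $p_j$, and since $(p_j)$ is dense and $\omega(q)$ is closed, $\omega(q)=X$, which is precisely transitivity.

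The main obstacle I expect is the bookkeeping in the pseudo-orbit construction: one must arrange the chain-transitivity bridges so that the one-step errors genuinely converge to zero while simultaneously guaranteeing that each target $p_j$ is visited by an honest flow arc (not merely by a jump), since it is the shadowing orbit that must accumulate on $p_j$, not the abstract pseudo-orbit. Controlling the reparametrization $h$ is not an issue here because we only need $\omega(q)$ to be dense and the convergence $d(x_0\star t,\phi_{h(t)}(q))\to0$ already transfers the visiting behavior regardless of how $h$ distorts time; the care is entirely in making the pseudo-orbit both a limit pseudo-orbit and dense-visiting at once, which the long dwell times near each $p_n$ secure.
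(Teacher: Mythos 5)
Your argument is correct and follows essentially the same route as the paper: the paper likewise treats only the substantive direction, concatenating chain-transitivity pseudo-orbits through denser and denser finite nets of $X$ into a single limit pseudo-orbit, shadowing it by some $y$, and checking that $\omega(y)=X$ by matching the shadowing orbit against the net points (which, as in your construction, occur as genuine pseudo-orbit points $y_m$, so no dwell time is needed). The only cosmetic difference is that the paper re-covers $X$ by a $1/2^k$-net at every stage $k$, so every ball $B(z,\eps)$ is automatically revisited at arbitrarily late stages, whereas with your single dense sequence you should enumerate each point infinitely often to guarantee that the visits to $B(z,\eps)$ occur at arbitrarily large times.
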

\begin{proof}
It is enough to assume chain-transitivity and prove transitivity. For each $k\in\N$, we can use compactness of $X$ to choose $n_k$ points $x_1^k,\, x^k_2,\,\dots,\,x^k_{n_k}$ such that 
	\[
	X=\bigcup_{i=1}^{n_k}B\left(x^k_{i},1/2^k\right).
	\]
Since $\phi$ is chain-transitive, we can choose, for each $k\in\N$ and $i\in\{1,\,2,\,\dots,\,n_k-1\}$, a $\frac{1}{2^k}$-pseudo-orbit $\alpha_i^k$ from $x^k_i$ to $x^k_{i+1}$ and also a $\frac{1}{2^k}$-pseudo-orbit $\beta_k$ from $x^k_{n_k}$ to $x^{k+1}_1$. Thus, the concatenated sequence 
	\[	\alpha_1^1\alpha_2^1\cdots\alpha_{n_1-1}^1\beta_1\alpha_1^2\alpha_2^2\cdots\alpha_{n_2-1}^2\beta_2\cdots\alpha_1^k\alpha_2^k\cdots\alpha_{n_k-1}^k\beta_k\cdots
	\]
	is a limit pseudo-orbit of $\phi$, so it can be limit shadowed by $y$ with a reparametrization $h\in\mathrm{Rep}$. We will prove that $\omega(y)=X$ and this proves transitivity of $\phi$. Write the above limit pseudo orbit as $(y_n,t_n)_{n\in\mathbb{N}\cup\{0\}}$ with $t_n\geq1$ and let $(s_n)_{n\in\mathbb{N}\cup\{0\}}$ be the corresponding sequence of sums of $(t_n)_{n\in\mathbb{N}\cup\{0\}}$. For each $z\in X$ and $\eps>0$, we will prove the existence of a point in the future orbit of $y$ in $B(z,\eps)$. 
Choose $T>0$ such that
$$d(y_0\star t,\phi_{h(t)}(y))\leq \frac{\varepsilon}{2} \,\,\,\,\,\, \text{for every} \,\,\,\,\,\, t\geq T$$
and $k>0$ such that $\frac{1}{2^k}<\frac{\varepsilon}{2}$. Increasing $k$, if necessary, we can choose $$x\in\{x^k_1,\,\cdots,\,x^k_{n_k}\}\cap B(z,\varepsilon/2)$$ such that $x=y_m$ for some $m\in\N$ and $s_m\geq T$. Thus, 
	\[
	d(x,\phi_{h(s_m)}(y))=d(\phi_{s_m-s_m}(y_m),\phi_{h(s_m)}(y))\leq\frac{\varepsilon}{2}
	\]
and, hence, 
\begin{eqnarray*}
	d(z,\phi_{h(s_m)}(y))&\leq& d(z,x)+d(x,\phi_{h(s_m)}(y))\\
&\leq& \frac{\varepsilon}{2}+\frac{\varepsilon}{2}=\varepsilon.
\end{eqnarray*}
	This finishes the proof.
\end{proof}

\begin{proof}[Proof of Theorem \ref{transha}]
Suppose that $\phi$ has the two-sided limit shadowing property. Theorem \ref{chain} assures that $\phi$ is chain transitive and since it has the limit shadowing property, Lemma \ref{chain2} assures it is transitive. The finite shadowing property is proved in Lemma \ref{fshadowing}. In the case $\phi$ does not have fixed points, Theorem 4 in \cite{K} assures that $\phi$ has the shadowing property.
\end{proof}



\vspace{+0.4cm}

\section{Singular Suspensions}\label{singu}

In this section we prove Theorem \ref{singularsus}. We begin stating the definition of a singular suspension as in \cite{K}. Let $X$ be a compact subspace of $\mathbb{R}^n$ with $\diam(X)\leq 1$ with respect to the Euclidean metric $|.|$ and $f\colon X\to X$ be a homeomorphism. Consider $X^f$ the suspension space of $f$ under the constant function $1$, choose $a\in X$ and set $e=(a,\frac{1}{2})$. Let
	$$U=\left\{x\in\mathbb{R}^{n+1}; \,\,\,\left|e-x\right|<\frac{1}{4}\right\}$$
	and $c:\mathbb{R}^{n+1}\to [0,1]$ be any $C^\infty$ function satisfying
\begin{enumerate}
\item $c(x)=0$ if, and only if, $x=e$; 
\item $0\leq c(x)<1$, if $x\in U$;
\item $c(x)=1$ if, and only if, $x\notin U$.
\end{enumerate}
Let $\varphi$ be the flow defined by the vector field 
$$\dot{x}_1=0, \ \ \ \dot{x}_2=c(x), \text{ for }x=(x_1,x_2)\in \mathbb{R}^{n}\times\mathbb{R}.$$
We restrict $\varphi$ to the set $X\times[0,1]$, call the induced flow on $X^f$ the \emph{singular suspension of} $f$ with singularity $e=(a,\frac{1}{2})$ and denote it by $\varphi^f$. 
\begin{center}

\begin{tikzpicture}[line cap=round,line join=round,x=1cm,y=1cm]
\clip(-0.6869410699876265,-0.5287005569038019) rectangle (5.9172723191270995,4.206258602031966);
\draw [->] (0,0) -- (0,2);
\draw [->] (0.5,0) -- (0.5,2);
\draw [->] (1,0) -- (1,2);
\draw [->] (1.5,0) -- (1.5,2);
\draw [->] (2,0) -- (2,1);
\draw [->] (2,2) -- (2,3);
\draw [->] (2.5,0) -- (2.5,2);
\draw [->] (3,0) -- (3,2);
\draw [->] (3.5,0) -- (3.5,2);
\draw [->] (4,0) -- (4,2);
\draw  (0,2)-- (0,4);
\draw  (0.5,2)-- (0.5,4);
\draw  (1,2)-- (1,4);
\draw  (1.5,2)-- (1.5,4);
\draw (2.5,2)-- (2.5,4);
\draw  (3,2)-- (3,4);
\draw (3.5,2)-- (3.5,4);
\draw  (4,2)-- (4,4);
\draw  (0,4)-- (4,4);
\draw [line width=1pt] (0,0)-- (4,0);
\draw  (2,1)-- (2,2);
\draw (2,3)-- (2,4);
\begin{scriptsize}

\draw (-0.2,4) node {$1$};

\draw (2.0551166917923864,-0.15863222423203532) node {$a$};
\draw (-0.2,2.158806984151138) node {$\frac{1}{2}$};
\draw [fill=black] (2,2) circle (1.5pt);
\draw (2,2) circle (28.5pt);
\draw (2.07,2.158806984151138) node {$e$};
\draw[color=black] (4,-0.3) node {$X$};
\draw[color=black] (2.7,3) node {$U$};

\end{scriptsize}
\end{tikzpicture}
\end{center}
As a first step to prove Theorem \ref{singularsus}, we generalize Theorem \ref{gapbase} to the case of a singular suspension.

\begin{theorem}\label{lkj}
If $\varphi^f$ has the two-sided limit shadowing property, then the base homeomorphism $f$ has the two-sided limit shadowing property with a gap.
\end{theorem}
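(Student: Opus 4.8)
The plan is to mirror the proof of Theorem \ref{gapbase}, replacing the ordinary suspension $\phi^f$ by the singular suspension $\varphi^f$ and keeping careful track of the fixed point $e=(a,\tfrac12)$. As there, we may take the height function to be $\tau\equiv1$, and we are given a two-sided limit pseudo-orbit $(x_n)_{n\in\Z}$ of $f$; the goal is to exhibit a point in the $f$-orbit of a suitable shadower that two-sided limit shadows $(x_n)$ with a uniformly bounded gap. The guiding principle is that $\varphi^f$ agrees with the unit-speed flow $\phi^f$ outside $\overline U$, so all the difficulties are localized near $e$.

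First I would build a two-sided limit pseudo-orbit of $\varphi^f$ from $(x_n)$, placed at a level lying \emph{below} the slow-down region. Since $U$ is the ball of radius $\tfrac14$ about $e$, every point $(x,\ell)$ with $\ell<\tfrac14$ lies outside $\overline U$, where $\varphi^f$ is genuinely the suspension flow. Fix $\ell=\tfrac18$ and let $t_n$ be the first return time of $\varphi^f$ from $(x_n,\ell)$ to the level $\ell$, that is, the time with $\varphi^f_{t_n}(x_n,\ell)=(f(x_n),\ell)$; since the flow traverses a full fiber of height $1$ at speed at most $1$, one has $t_n\ge1$, and $t_n$ is finite unless the orbit of $(x_n,\ell)$ meets $e$, which (as $\ell<\tfrac12$) happens \emph{only} when $x_n=a$. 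Because $\varphi^f_{t_n}(x_n,\ell)=(f(x_n),\ell)$ and $d(f(x_n),x_{n+1})\to0$, the sequence $((x_n,\ell),t_n)_{n\in\Z}$ is a two-sided limit pseudo-orbit of $\varphi^f$. The only obstruction is the base point $a$: if $x_n=a$ for only finitely many $n$ this is harmless, since the limit pseudo-orbit condition constrains only $|n|\to\infty$ and those finitely many $t_n$ may be chosen arbitrarily; if $x_n=a$ for infinitely many $n$, I would first replace $(x_n)$ by a nearby two-sided limit pseudo-orbit avoiding $a$, perturbing each offending $x_n$ by an amount tending to $0$ as $|n|\to\infty$ (possible whenever $a$ is not isolated; the isolated case is degenerate and handled separately), which alters neither the limit pseudo-orbit property nor the final shadowing conclusion since $d(x_n',x_n)\to0$.

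Next I apply the two-sided limit shadowing property of $\varphi^f$ to get $(x,s)\in X^f$ and $h\in\mathrm{Rep}$ with $d^f((x_0,\ell)\star t,\varphi^f_{h(t)}(x,s))\to0$ as $|t|\to\infty$. Writing $\sigma_n=\sum_{j<n}t_j$ for the partial sums of $(t_n)$, we have $(x_0,\ell)\star\sigma_n=(x_n,\ell)$, so $\varphi^f_{h(\sigma_n)}(x,s)\to(x_n,\ell)$ in $d^f$. Because $(x_n,\ell)$ lies outside $\overline U$, a small $d^f$-ball about it sees only the unit-speed suspension, so Lemmas 2.4 and 2.5 of \cite{Th} apply verbatim (the underlying space and the Bowen--Walters metric $d^f$ are unchanged by the time change): they force the base coordinate and level of $\varphi^f_{h(\sigma_n)}(x,s)$ to match those of $(x_n,\ell)$ and pin down its fiber index. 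Since the flow is monotone in the level variable ($\dot{x}_2=c\ge0$), between consecutive times $\sigma_n$ the pseudo-orbit crosses the gluing level exactly once, and the shadower, being $d^f$-close at both ends, crosses it exactly once as well, so the fiber index advances by exactly one per step in each direction. Introducing $N_1,N_2$ as the shadower's fiber indices at $\sigma_M$ and $\sigma_{-M}$ (with $M$ from a preliminary $\tfrac14$-shadowing, as in Theorem \ref{gapbase}), this gives $d(f^{\,n+N_1-M}(x),x_n)\to0$ as $n\to+\infty$ and $d(f^{\,N_2-n+M}(x),x_{-n})\to0$ as $n\to+\infty$, so $f^{N_2+M}(x)$ two-sided limit shadows $(x_n)$ with gap $N_1-2M-N_2$, exactly as in Theorem \ref{gapbase}.

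Finally I would check the gap is bounded by a constant independent of the pseudo-orbit, as the definition of two-sided limit shadowing with a gap requires. This holds because the pseudo-orbit advances exactly $2M$ fibers between indices $-M$ and $M$, whence the shadower advances $2M+O(1)$ fibers over the flow-time $h(\sigma_M)-h(\sigma_{-M})$, forcing $N_1-N_2=2M+O(1)$ and hence $|N_1-2M-N_2|=O(1)$ uniformly. The main obstacle is exactly the control near the singularity: one must place the pseudo-orbit at a level outside $\overline U$ so that the flow there is a genuine suspension, rule out the shadower being absorbed by the fixed point $e$ (which is automatic, since the pseudo-orbit crosses the gluing level infinitely often in each time direction and so must its shadower), and dispose of the single bad base point $a$. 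Once these singular phenomena are quarantined to a region where $\varphi^f$ looks like $\phi^f$, the remaining bookkeeping is identical to that of Theorem \ref{gapbase}.
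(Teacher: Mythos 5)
Your proposal follows essentially the same route as the paper's proof: lift the pseudo-orbit (perturbed away from $a$) to a two-sided limit pseudo-orbit of $\varphi^f$ at a fixed level using first return times, invoke the two-sided limit shadowing property of $\varphi^f$, observe that the shadowing orbit cannot fall into the stable or unstable set of $e$ because the pseudo-orbit crosses the base infinitely often, and then repeat the bookkeeping of Theorem \ref{gapbase}; your choice of level $\tfrac18$ with return times $t_n$ versus the paper's level $\tfrac12$ with return times $\beta(y_n)$ is immaterial. The only substantive step you defer is the case where $a$ is isolated, which the paper settles explicitly by showing that $f$ would then be transitive with $a$ periodic, so $X^f$ would be a homoclinic loop of $e$ and $\varphi^f$ could not have even the limit shadowing property.
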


\begin{proof}
First, we note that $a$ cannot be isolated in $X$. Indeed, Theorem \ref{transha} assures that $\varphi^f$ is transitive, which, in turn, implies that the base homeomorphism $f$ is transitive, since a dense forward orbit for the singular suspension flow is also a dense forward orbit for the usual suspension flow, which is above a dense forward orbit for the base homeomorphism. If $a$ is isolated, then it is necessarily a transitive point of $f$. We note that it also must be periodic, since it belongs to its omega limit set and is isolated. This would prove that the space $X$ is just the periodic orbit of $a$ and that the space $X^f$ is a homoclinic loop of the fixed point $e$. The flow $\varphi^f$ does not have the limit shadowing property since we can consider a limit pseudo orbit with an infinite number of turns in this cycle, with jumps arbitrarily close to $e$ from the stable orbit to the unstable orbit. This limit pseudo orbit cannot be limit shadowed by any orbit of the space, which, indeed, contains only two orbits: the fixed point and the homoclinic loop.

Now let $(x_n)_{n\in\mathbb{Z}}\subset X$ be a two-sided limit pseudo-orbit of $f$ and define $(y_n)_{n\in\Z}$ as follows: $y_n=x_n$ if $x_n\neq a$ and when $x_n=a$ then let $y_n$ be any point in $X$ satisfying $|y_n-e|<1/n$. It is easy to see that $(y_n)_{n\in\Z}$ is also a two-sided limit pseudo orbit of $f$ and that any point two-sided limit shadowing $(y_n)_{n\in\Z}$ does the same in $(x_n)_{n\in\Z}$. For each $x\in X-\{a\}$ define $\beta(x)>0$ as the smallest real positive number such that $$\varphi^f_{\beta(x)}((x,1/2))=(f(x),1/2).$$ Observe that if one consider the usual suspension flow, then $\beta\equiv1$, but for the singular suspension flow $\beta$ increases arbitrarily as the point approaches $a$.
We claim that $((y_n,\frac{1}{2}),\beta(y_n))_{n\in\Z}$ is a two-sided limit pseudo-orbit of $\varphi^f$. We also observe that this could not be true using $x_n$ instead of $y_n$, since $\beta$ is not defined in $a$.
We have to prove that $$d^f(\varphi^f_{\beta(y_n)}(y_n,1/2), (y_{n+1},1/2))\rightarrow 0 \,\,\,\,\,\, \text{when} \,\,\,\,\,\, n\rightarrow\pm\infty.$$
Note that for each $n\in\N$ we have
\begin{eqnarray*}
d^f(\varphi^f_{\beta(y_n)}(y_n,1/2), (y_{n+1},1/2))&=& d^f((f(y_n),1/2),(y_{n+1},1/2)) \\
&=&\frac{1}{2}|f(y_{n})-y_{n+1})| + \frac{1}{2}|f^2(y_n)-f(y_{n+1})|
\end{eqnarray*}	
so the desired limits follow from the fact that $(y_n)_{n\in\Z}$ is a two-sided limit pseudo orbit and $f$ is uniformly continuous.
Now the proof follows exactly the proof for the usual suspension in Theorem \ref{gapbase} because the shadowing orbit given by the two-sided limit shadowing property of $\varphi^f$ cannot be in the stable or the unstable set of the fixed point, since the segments of orbit of $((y_n,\frac{1}{2}),\beta(y_n))_{n\in\Z}$ intersect the base $X\times \{0\}$ an infinite number of times.

%
%
%
%
%
	%
%

\end{proof}

A second step in the proof of Theorem \ref{singularsus} is done in Proposition \ref{fixedstable} below where we use the two-sided limit shadowing property for the singular suspension flow to prove that morally there is only two possible stable and unstable sets for the whole flow. First, we state and prove two simple facts used in the proof. Since these are general results, we return to the set of the previous sections where $(X,d)$ is a compact metric space and $\phi$ is a continuous flow in $X$. We recall the definition of the stable and unstable sets of a fixed point $p$ of $\phi$, respectively:
$$W^s(p)=\{y\in X; \,\,d(\phi_t(x),p)\to0 \,\,\, \text{when} \,\,\, t\to+\infty\} \,\,\,\,\,\, \text{and}$$
$$W^u(p)=\{y\in X; \,\,d(\phi_t(x),p)\to0 \,\,\, \text{when} \,\,\, t\to-\infty\}.$$

\begin{lemma}\label{sing}
If $p$ is a fixed point of a continuous flow $\phi$ and there exist $x\in X$ and $h\in\text{Rep}$ such that $d(\phi_{h(t)}(x),p)\to0$ when $t\to-\infty$, then $x\in W^u(p)$.
\end{lemma}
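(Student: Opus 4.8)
The plan is to observe that the hypothesis differs from the definition of $W^u(p)$ only by the reparametrization $h$, and that reparametrizations in $\mathrm{Rep}$ do not affect limits at $-\infty$. Concretely, I want to reduce the statement $d(\phi_{h(t)}(x),p)\to 0$ as $t\to-\infty$ to the statement $d(\phi_s(x),p)\to 0$ as $s\to-\infty$, which is exactly the condition $x\in W^u(p)$.

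First I would record the relevant properties of $h$. Since $h\in\mathrm{Rep}$, it is an increasing homeomorphism of $\mathbb{R}$ with $h(0)=0$; in particular it is a bijection of $\mathbb{R}$ onto $\mathbb{R}$, its inverse $h^{-1}$ is again an increasing homeomorphism, and monotonicity together with surjectivity forces $\lim_{t\to-\infty}h(t)=-\infty$ and likewise $\lim_{s\to-\infty}h^{-1}(s)=-\infty$. This is the only structural fact about $h$ that the argument needs.

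Next I would perform the change of variable $s=h(t)$, equivalently $t=h^{-1}(s)$. Given any $\eps>0$, the hypothesis provides $T>0$ with $d(\phi_{h(t)}(x),p)<\eps$ whenever $t\le -T$. Setting $s=h(t)$, the condition $t\le -T$ becomes $s\le h(-T)=:S$, and then $d(\phi_s(x),p)=d(\phi_{h(t)}(x),p)<\eps$ for all $s\le S$. Since $\eps>0$ was arbitrary, this shows $d(\phi_s(x),p)\to 0$ as $s\to-\infty$, i.e. $x\in W^u(p)$. (One may equally phrase this sequentially: for $s_n\to-\infty$ put $t_n=h^{-1}(s_n)\to-\infty$ and note $d(\phi_{s_n}(x),p)=d(\phi_{h(t_n)}(x),p)\to 0$.)

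I do not expect any genuine obstacle here; the lemma is essentially a restatement of the definition of $W^u(p)$ modulo the time-change $h$. The only point requiring a word of justification is that an increasing homeomorphism of $\mathbb{R}$ sends $-\infty$ to $-\infty$, so that ``$t\to-\infty$'' and ``$h(t)\to-\infty$'' describe the same tail behaviour; everything else is a direct substitution.
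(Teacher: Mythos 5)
Your argument is correct and coincides with the paper's own proof: both fix $\eps>0$, take a threshold $t_0$ from the hypothesis, and use that the increasing homeomorphism $h$ carries the tail $t\le t_0$ onto the tail $s\le h(t_0)$, so the reparametrization can simply be dropped. Nothing further is needed.
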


\begin{proof}
For each $\eps>0$ choose $t_0<0$ such that $d(\phi_{h(t)}(x),p)<\eps$ for every $t\leq t_0$. Thus, $d(\phi_t(x),p)<\eps$ for every $t\leq h(t_0)$. Since this holds for every $\eps>0$, it follows that $x\in W^u(p)$.
\end{proof}

\begin{remark}
A similar results holds for the stable set $W^s(p)$.
\end{remark}

\begin{lemma}\label{change}
If $g\in Rep$ and $x,y\in X$ satisfy $$d(\phi_{g(t)}(y),\phi_t(x))\to0, \,\,\,\,\,\, t\to+\infty,$$ then $h=g^{-1}\in Rep$ and satisfies $d(\phi_t(y),\phi_{h(t)}(x))\to0$, when $t\to+\infty$.
\end{lemma}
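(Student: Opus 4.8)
The plan is to establish the two assertions separately: that $h=g^{-1}$ belongs to $\mathrm{Rep}$, which is immediate from the definition, and that the stated convergence holds, which follows from a change of variables in the limit.

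For the first assertion, I would simply note that since $g$ is an increasing homeomorphism of $\mathbb{R}$, its inverse $h=g^{-1}$ is again an increasing homeomorphism of $\mathbb{R}$, and $h(0)=g^{-1}(0)=0$ because $g(0)=0$. Hence $h\in\mathrm{Rep}$. For the convergence, the key observation is that substituting $s=g(t)$ (equivalently $t=h(s)$) and using $g\circ h=\mathrm{id}$ turns the hypothesized quantity $d(\phi_{g(t)}(y),\phi_t(x))$ into $d(\phi_s(y),\phi_{h(s)}(x))$, which is exactly the quantity in the conclusion. Since $g$ is an increasing homeomorphism, $t\to+\infty$ holds if and only if $s=g(t)\to+\infty$, so the two limit statements are equivalent.

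To make this precise, I would fix $\varepsilon>0$ and use the hypothesis to pick $T>0$ with $d(\phi_{g(t)}(y),\phi_t(x))<\varepsilon$ for all $t\geq T$. Setting $S=g(T)$, monotonicity of $h$ gives $h(s)\geq h(g(T))=T$ for every $s\geq S$, so applying the hypothesis with $t=h(s)$ yields
\[
d(\phi_s(y),\phi_{h(s)}(x))=d(\phi_{g(h(s))}(y),\phi_{h(s)}(x))<\varepsilon.
\]
As $\varepsilon>0$ was arbitrary, this shows $d(\phi_t(y),\phi_{h(t)}(x))\to0$ when $t\to+\infty$. The argument presents no real obstacle; the only point requiring care is the direction of the substitution together with the use of monotonicity of $g$ (equivalently of $h$) to convert the range $t\geq T$ into $s\geq g(T)$. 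No compactness of $X$ or continuity of $\phi$ beyond the flow axioms is needed.
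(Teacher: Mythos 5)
Your proof is correct and follows essentially the same route as the paper: the substitution $t=g(s)$ (equivalently $s=h(t)$) turns the hypothesized quantity into the desired one, and monotonicity of $g$ ensures the limits correspond. Your version merely makes explicit the $\varepsilon$--$T$ bookkeeping that the paper leaves implicit.
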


\begin{proof}
For each $t\geq0$ consider $s\geq0$ such that $t=g(s)$. Then one just need to note that
$$d(\phi_t(y),\phi_{h(t)}(x))=d(\phi_{g(s)}(y),\phi_s(x))$$ and that $s\to+\infty$ when $t\to+\infty$.
\end{proof}

\begin{proposition}\label{fixedstable}
If a continuous flow satisfying the two-sided limit shadowing property admits a fixed point $p$ such that $W^u(p)=\{p\}\cup\mathcal{O}(y)$, with $y\neq p$, then for each $x\in X$, it follows that either $x\in W^s(p)$ or there exists $h\in\text{Rep}$ such that $$d(\phi_{h(t)}(x),\phi_t(y))\to0, \,\,\,\,\,\, t\to+\infty.$$
\end{proposition}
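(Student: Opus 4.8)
The plan is to feed a carefully chosen two-sided limit pseudo-orbit into the hypothesis and read off the two alternatives from where the shadowing point lands. Fix $x\in X$ and consider the sequence $(x_i,t_i)_{i\in\Z}$ with $t_i=1$ for all $i$, with $x_i=p$ for $i<0$ and $x_i=\phi_i(x)$ for $i\geq 0$. Since $p$ is fixed and the terms with $i\geq 0$ genuinely follow the orbit of $x$, every consecutive error $d(\phi_{t_i}(x_i),x_{i+1})$ vanishes except for the single jump at $i=-1$, where we pass from $p$ to $x$. In particular $d(\phi_{t_i}(x_i),x_{i+1})\to 0$ as $|i|\to\infty$, so this is a legitimate two-sided limit pseudo-orbit (one large jump is harmless). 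Computing the associated sums gives $s_i=i$, hence $x_0\star t=\phi_t(x)$ for $t\geq 0$ and $x_0\star t=p$ for $t<0$.

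Applying the two-sided limit shadowing property yields $z\in X$ and $h\in\mathrm{Rep}$ with $d(x_0\star t,\phi_{h(t)}(z))\to 0$ as $|t|\to\infty$. Reading this as $t\to-\infty$ gives $d(\phi_{h(t)}(z),p)\to 0$, so Lemma \ref{sing} forces $z\in W^u(p)=\{p\}\cup\mathcal{O}(y)$. Reading it as $t\to+\infty$ gives $d(\phi_t(x),\phi_{h(t)}(z))\to 0$. I would then split on the two possibilities for $z$. If $z=p$, then $\phi_{h(t)}(z)=p$ for every $t$, so $d(\phi_t(x),p)\to 0$ as $t\to+\infty$, i.e. $x\in W^s(p)$, which is the first alternative.

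If instead $z=\phi_s(y)$ for some $s\in\R$, I would apply Lemma \ref{change} to $d(\phi_{h(t)}(z),\phi_t(x))\to 0$ (legitimate, since $h\in\mathrm{Rep}$) to obtain $d(\phi_t(z),\phi_{h^{-1}(t)}(x))\to 0$ with $h^{-1}\in\mathrm{Rep}$. Since $\phi_t(z)=\phi_{t+s}(y)$, the substitution $u=t+s$ yields $d(\phi_u(y),\phi_{h^{-1}(u-s)}(x))\to 0$ as $u\to+\infty$. Setting $\tilde h(u)=h^{-1}(u-s)$ produces an increasing homeomorphism of $\R$ realizing the desired asymptotics $d(\phi_{\tilde h(u)}(x),\phi_u(y))\to 0$, which is the second alternative.

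The one genuinely fiddly point, and the step I expect to cost the most care, is that $\tilde h(u)=h^{-1}(u-s)$ need not satisfy $\tilde h(0)=0$, so it is not literally an element of $\mathrm{Rep}$. Because the conclusion only constrains behaviour as $u\to+\infty$, I would cure this by fixing a large $T>0$ with $\tilde h(T)>0$, leaving $\tilde h$ unchanged on $[T,\infty)$, and replacing it on $(-\infty,T)$ by the linear map $u\mapsto(\tilde h(T)/T)\,u$. The resulting map lies in $\mathrm{Rep}$, agrees with $\tilde h$ for $u\geq T$, and hence preserves the limit. This free modification of a reparametrization on a compact set is the only place requiring slight care, and it is routine.
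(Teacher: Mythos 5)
Your proof is correct and follows essentially the same route as the paper: the same pseudo-orbit (constant at $p$ in the past, the orbit of $x$ in the future), Lemma \ref{sing} to place the shadowing point in $W^u(p)$, the same case split, and Lemma \ref{change} to invert the reparametrization. The only difference is cosmetic: where you absorb the time-shift $s$ by hand (substitution plus a linear patch of $\tilde h$ on a compact set to restore $\tilde h(0)=0$), the paper invokes its Lemma \ref{conserta} to the same effect.
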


\begin{proof}
Consider the two-sided limit pseudo orbit that is formed by the fixed point $p$ in the past and by the future orbit of $x$. The two-sided limit shadowing property assures the existence of $z\in X$ and $\alpha\in\text{Rep}$ such that $z\in W^u(p)$ (Lemma \ref{sing}) and $d(\phi_{\alpha(t)}(z),\phi_t(x))\to0$ when $t\to+\infty$. Since $z\in W^u(p)=\{p\}\cup\mathcal{O}(y)$, it follows that either $z=p$ or $z\in\mathcal{O}(y)$. In the first case, we have $x\in W^s(p)$, while in the second case, $z=\phi_T(y)$ for some $T\in\R$ and we have $$d(\phi_{\alpha(t)+T}(y)),\phi_t(x))\to0, \,\,\,\,\,\, t\to+\infty.$$ Then Lemmas \ref{conserta} and \ref{change} assures the existence of $h\in Rep$ such that $$d(\phi_{h(t)}(x),\phi_t(y))\to0, \,\,\,\,\,\, t\to+\infty.$$ This finishes the proof.
\end{proof}

A similar result can be obtained when $W^s(p)=\{p\}\cup\mathcal{O}(z)$, with $z\neq p$: either $x$ is in the unstable set of the fixed point or it follows the orbit of $z$ in the past. We note that these hypothesis on the stable and unstable set of the fixed point is exactly what happens in the case of the singular suspension flow. To prove Theorem \ref{singularsus} we will need the definition of an equicontinuous homeomorphism. We say that $f$ is \emph{equicontinuous} if for every $\eps>0$, there exists $\delta>0$ such that
$$d(x,y)<\delta \,\,\,\,\,\, \text{implies} \,\,\,\,\,\, d(f^n(x),f^n(y))<\eps \,\,\,\,\,\, \forall n\in\N.$$
The \emph{adding machines} \cite{MY} are examples of transitive and equicontinuous homeomorphisms satisfying the shadowing property (they are indeed minimal, that is, the future orbit of every point of the space is dense in the space).



\begin{proof}[Proof of Theorem \ref{singularsus}]
Suppose that the singular suspension $\varphi^f$ satisfies the two-sided limit shadowing property. Theorem \ref{transha} assures that it is transitive and satisfies the finite shadowing property. Theorem 6 in \cite{K} assures that the base homeomorphism $f$ satisfies the shadowing property and it is also transitive as observed above.
The unstable set of the fixed point is $W^u(e)=\{e\}\cup\mathcal{O}(y)$, with $y\neq e$, and the stable set is $W^s(e)=\{e\}\cup\mathcal{O}(z)$, with $z\neq e$. Let $x\in X$ be a transitive point of $f$ that is not in the past orbit of $a$. Then $(x,0)\notin W^s(e)$ and Proposition \ref{fixedstable} assures the existence of $h\in\text{Rep}$ such that $$d^f(\varphi^f_{h(t)}(x,0),\varphi^f_t(y))\to0, \,\,\,\,\,\, t\to+\infty.$$ It follows from this that $y$ is a transitive point for the singular suspension flow. Proposition \ref{fixedstable} applies again to ensure that every point of $X$ that is not in the past orbit of $a$ is a transitive point of $f$. The same argument applied to $f^{-1}$ proves that every point of $X$ that is not in the future orbit of $a$ is a transitive point for $f^{-1}$.
This implies that the orbit of $a$ for $f$ is infinite. Indeed, if $a$ is periodic, then for each $\eps>0$ we choose $\delta>0$ given by the shadowing property of $f$ and consider a $\delta$-pseudo orbit formed by the past orbit of $a$ and the future orbit of a point $w$ that is $\delta$-close to $a$ and is not in the orbit of $a$ (recall that $a$ is not isolated in $X$).
The shadowing property of $f$ assures the existence of $w'\in X$ that $\eps$-shadows this pseudo-orbit and, hence, $w'\in W^u_{\eps}(a)\cap W^s_{\eps}(w)$. If $w'$ belongs to the orbit of $a$, then the orbit of $w$ is contained in the $\eps$-neighborhood of the orbit of $a$ since $w'\in W^s_{\eps}(w)$. Since $w$ is a transitive point, it follows that $X$ is contained in closed $\eps$-neighborhood of the orbit of $a$. In the case $w'$ does not belong to the orbit of $a$, then $w'$ is a transitive point for $f^{-1}$, as we proved before, and its orbit is entirely contained in the $\eps$-neighborhood of the orbit of $a$. Again, it follows that $X$ is contained in closed $\eps$-neighborhood of the orbit of $a$. Since this can be done for every $\eps>0$, we obtain that $X$ is exactly the orbit of $a$, but this contradicts the fact that $a$ is not isolated in $X$.

The orbit of $a$ being infinite assures that $f(a)$ does not belong to the past orbit of $a$ and, hence, $f(a)$ is transitive. This implies that the points in the past orbit of $a$ are also transitive and we have proved that $f$ is minimal. It follows that $f$ is topologically conjugate to an adding machine map (see \cite{MY}). In particular, $f$ is equicontinuous (and so is $f^{-1}$ as proved in \cite{AG} Theorem 3.4). The stable and the unstable sets of every point of $X$ are trivial. Indeed, if $b\in W^s(c)$ but $|b-c|>0$, we choose $\delta>0$, given by the equicontinuity of $f^{-1}$, and choose $N\in\N$ such that $|f^N(b)-f^N(c)|<\delta$. Then
$$|f^{-n}(f^N(b))-f^{-n}(f^N(c))|<\frac{|b-c|}{2} \,\,\,\,\,\, \text{for every} \,\,\,\,\,\, n\in\N.$$ Letting $n=N$ we obtain $|b-c|<|b-c|/2$ that is a contradiction. Stable and unstable sets being trivial together with the fact that $f$ satisfies the two-sided limit shadowing property with a gap, as proved in Theorem \ref{lkj}, assure that the space $X$ is finite and a single orbit of $f$. Indeed, if $p,q\in X$, then the two-sided limit shadowing property with a gap assures the existence of $z\in W^u(p)\cap W^s(f^m(q))$ for some $m\in\Z$. If $p$ and $q$ belong to different orbits, then $f^m(q)\neq p$ and either $W^u(p)$ or $W^s(f^m(q))$ is non-trivial.
\end{proof}

\begin{remark}
The end of this proof proves that if a minimal homeomorphism satisfies the two-sided limit shadowing property with a gap, then the space is finite and a single periodic orbit. In particular, the adding machines do not satisfy the two-sided limit shadowing property with a gap, even though they satisfy the limit shadowing property (see \cite{Ka}). They satisfy even the orbital two-sided limit shadowing property, as defined in \cite{P1}.
\end{remark}

\vspace{+0.4cm}

\section*{Acknowledgments}
The second author was supported by Capes, CNPq and the Alexander von Humboldt Foundation.

\vspace{1cm}
\noindent

{\em J. Aponte}

\vspace{0.2cm}

\noindent

ESPOL Polytechnic University, 

Escuela Superior Politécnica del Litoral, ESPOL 

Facultad de Ciencias Naturales y Matemáticas 

Campus Gustavo Galindo, Km. 30.5 Vía Perimetral 

P.O. Box 09-01-5863, Guayaquil, Ecuador

\vspace{0.2cm}

\email{japonte@espol.edu.ec}

\vspace{1.5cm}
\noindent

{\em B. Carvalho}
\vspace{0.2cm}

\noindent

Departamento de Matem\'atica,

Universidade Federal de Minas Gerais - UFMG

Av. Ant\^onio Carlos, 6627 - Campus Pampulha

Belo Horizonte - MG, Brazil.

\vspace{0.2cm}
Friedrich-Schiller-Universität Jena

Fakultät für Mathematik und Informatik

Ernst-Abbe-Platz 2

07743 Jena

\vspace{0.2cm}

\email{bmcarvalho@mat.ufmg.br}
\vspace{1.5cm}
\noindent

{\em W. Cordeiro}

\noindent

Institute of Mathematics, Polish Academy of Sciences

ul. \'Sniadeckich, 8

00-656 Warszawa - Poland

\vspace{0.2cm}

\email{wcordeiro@impan.pl}

\noindent

\vspace{0.2cm}
\email{}

\end{document}